\documentclass{amsart}
\usepackage{amssymb,amscd}

\newcommand{\Q}{{\mathbb Q}}
\newcommand{\Z}{{\mathbb Z}}
\newcommand{\C}{{\mathbb C}}
\newcommand{\R}{{\mathbb R}}
\newcommand{\N}{{\mathbb N}}
\newcommand{\Cl}{\operatorname{Cl}}

\newcommand{\Gal}{\operatorname{Gal}}

\newcommand{\D}{{\mathcal D}}
\newcommand{\PP}{{\mathcal P}}
\newcommand{\OO}{{\mathcal O}}

\newcommand{\fa}{{\mathfrak a}}

\newcommand{\fc}{{\mathfrak c}}
\newcommand{\fd}{{\mathfrak d}}

\newcommand{\fp}{{\mathfrak p}}

\newcommand{\la}{\langle}
\newcommand{\ra}{\rangle}

\begin{document}
\title[Distribution of Irreducibles]{\bf The Distribution
of the Irreducibles in an Algebraic Number Field}

\author{David M. Bradley, Ali E. \"{O}zl\"{u}k, Rebecca A. Rozario,  C. Snyder}

\thanks{This manuscript is an extension of the third author's
Masters thesis at UMaine.  The published version appears in the
Journal of the Australian Mathematical Society, vol.~79 (2007),
no.~3, pp.\ 369--390. URL:
http://www.austms.org.au/Publ/JAustMS/V79P3/c97.html }

\newtheorem{thm}{Theorem}
\newtheorem{lem}{Lemma}
\newtheorem{prop}{Proposition}
\newtheorem{cor}{Corollary}
\newtheorem{df}{Definition}

\begin{abstract} We study the distribution
of  principal ideals generated by  irreducible elements in an
algebraic number field.
\end{abstract}
\maketitle

\section{Introduction}

In an abstract algebra course,  students learn that the concepts
of  prime and  irreducible elements do  not coincide in an
integral domain without unique factorization. Usually, various
examples are given in $\Z[\sqrt{-5}\,]$, for instance, showing the
existence of irreducibles which are not prime. Of course, as every
student knows any prime is irreducible and so generally there are
more irreducibles than primes.

This difference leads naturally to two  questions. First, can one
give a characterization of irreducibles in familiar integral
domains where unique factorization need not hold, such as the ring
of integers in an algebraic number field? Second, how  are the
irreducibles distributed, again in an algebraic number field?

The problem of characterizing irreducibles involves, among many
challenges, a good characterization of all the prime ideals in any
given ideal class of the ideal class group of the field. This has
a particularly nice solution when the Hilbert class field of the
number field is an abelian extension of the field of rational
numbers $\Q$, for class field theory shows us that the solution
involves congruences, modulo certain integers depending on only
the field, for the rational primes contained in the prime ideals.
(As a minor aside, we give a characterization of the irreducibles
and primes in two imaginary quadratic number fields of class
number two in the last section of this paper.) In other cases such
a satisfactory characterization is not known and probably even
nonexistent.

In this note, we study instead the distribution of irreducibles.
First, we give a little background. Let $K$ be an algebraic number
field and denote by $M(x)$ the number of nonassociate irreducible
elements $\alpha$ with $|N_{K/\Q}(\alpha)|\leq x.$ In the 1960's,
J.P. R\'emond, cf. \cite{R}, showed that $$M(x)\sim C\frac{x}{\log
x}(\log\log x)^{D-1},$$ as $x\rightarrow\infty$, where $C$ is a
positive constant not explicitly given and $D$ is the Davenport
constant which is a positive integer depending on only the
structure of the ideal class group of $K$. Now, if we let $P(x)$
denote the number of nonassociate primes $\pi$ with
$|N_{K/\Q}(\pi)|\leq x,$ then by a classical density result
 $$P(x)\sim \frac1{h}\frac{x}{\log x},$$ where $h$ is
the class number of the field, i.e. the order of the ideal class
group. If $h>1$ (so $D>1$, cf. section 2), then there are  ``many
more" irreducibles than primes. If $h=1$, however, then the ring
of integers is a unique factorization domain and hence the
irreducibles and primes coincide. This is consistent with the
estimates above once we observe that in this case, $C=1$ and
$D=1$; see the next section for more on these constants.

Subsequently, J. Kaczorowski, cf. \cite{K}, gave a major extension
of R\'emond's result, which we state here in simplified form:
$$M(x)=\frac{x}{\log x}\left(\sum_{j=0}^{D-1}m_j(\log\log
x)^j\right)+O\left(\frac{x}{\log^2x}(\log\log x)^{c}\right),$$ as
$x\rightarrow\infty$, for some constant $c>0$ and complex numbers
$m_j$. In particular, $m_{D-1}=C$ the coefficient in R\'emond's
estimate. As in R\'emond's case, the constants depend on $K$ but
are not explicitly given.

Later, F. Halter-Koch and W. M\"uller in joint work \cite{H-KM}
showed, among many results, how to determine the constant $C$ and
as a result showed that it depends on only the class group of $K$.

This result prompted us to explore the dependence of some of  the
other coefficients in Kaczorowski's estimate on the arithmetic of
$K$. In particular, we consider $m_{D-2}$ and give an explicit
expression for this coefficient. We then apply this to the special
case of a number field with cyclic class group in which case we
find that $m_{D-2}$ contains explicit arithmetic information about
the field and some of the subfields of its Hilbert class field.
(We chose the case of cyclic class group due to the messy
combinatorical arguments in the general case. It would still
perhaps be  of interest to see what happens in general.) Finally,
we compute---more precisely, approximate---$m_{D-2}$ for two
imaginary quadratic number fields with class number two. Indeed,
this calculation shows that more than just properties of the class
group figure into the makeup of $m_{D-2}.$

\section{A Dirichlet Series Associated with Irreducibles}

Let $K$ be an algebraic number field, i.e. a finite extension of
the rational number field, $\Q$, and let $\OO_K$ denote its ring
of integers. We denote by $N(x)$ the norm of an element $x$ from
$K$ to $\Q$.  Also, we denote by $N\fa$ the norm of an ideal $\fa$
of $\OO_K$. Furthermore, let $\Cl=\Cl(K)$ denote the class group
of $K$ and $h=h_K$ the class number, i.e. the order of $\Cl(K)$.

In studying the distribution of the irreducibles, we introduce the
following function.

\begin{df} $$\mu(s)=\sum_{(\alpha)\atop \alpha~
irred.}|N(\alpha)|^{-s},$$ where $s$ is a complex number with real
part, $\sigma >1$. \end{df}
 The sum runs over the principal ideals generated by
 irreducible elements of $\OO_K$. We obviously do not wish to
 count all associates of an irreducible since there are
 infinitely many when the unit group is infinite, i.e.
anytime $K$ is not $\Q$ or an imaginary quadratic number field.

Ultimately, we shall be interested in the ``summatory" function
given by
\begin{df} $$M(x)=\sum_{{(\alpha) \atop \alpha~ irred.}\atop
|N(\alpha)|\leq x}1, $$ where $x$ is any positive real number.
\end{df}

We shall first determine  properties of $\mu(s)$  and then use a
well-known Tauberian theorem to glean information about the
distribution of $M(x)$.

To this end, consider the following.  Write
$\Cl=\{\fc_1=1,\fc_2,\cdots,\fc_h\}$.

\begin{df} For each positive integer $m$, let
$$\mathcal{D}_m=\{\underline{k}=(k_1,\cdots,k_h)\in \N_0^h~:
~\prod_{j=1}^h\fc_j^{k_i}\overset{min}{=}1,~k_1+\cdots+k_h=m\},$$
where $\prod \fc_i^{k_i}\overset{min}{=}1$ means that $\prod
\fc_i^{k_i}=1$ and if $\prod \fc_i^{\ell_i}=1$ for some $\ell_i$
such that $0\leq \ell_i \leq k_i$ for $i=1,\cdots,h$, then
$\ell_i=0$ for all $i$ or $\ell_i=k_i$ for all $i$. (Here $\N_0$
denotes the set of nonnegative integers.)
\end{df}

Notice that $\overset{min}{=}$ guarantees that a product of
elements is $1$ but no nontrivial subproduct is $1$. Hence the
product gives a ``minimal" representation of $1$.

Later on it will be more convenient to think of the elements of
$\mathcal{D}_m$ as functions in the usual way; namely,
$$\mathcal{D}_m=\{\kappa : \Cl \longrightarrow \N_0\; |\;
\prod_{\fc\in\Cl}\fc^{\kappa(\fc)}\overset{min}{=}1,\, \sum_\fc
\kappa(\fc)=m\}.$$

\begin{df} The Davenport constant of  $~\Cl$, denoted by $D$ or
$D(\Cl)$, is the largest positive integer $m$ such that $\D_m$ is
nonempty.
\end{df}

The Davenport constant is defined as above for any finite abelian
group. In general, the relation  between the Davenport constant
and the structure of the group is not known. On the other hand, it
is well known (and easy to prove) that the Davenport constant is
no larger than the order of the group.

 We now have the following
proposition which gives a connection between irreducibles and
prime ideals. First, we denote the set of nonzero prime ideals of
$\OO_K$ by $\PP$.

\begin{prop}\label{P1}
$$\mu(s)=\sum_{m=1}^D~\sum_{\underline{k}\in\D_m}~ \prod_{i=1}^h
\sum_{{\fa_i \atop \exists \, \fp_{i1},\cdots,\fp_{ik_i}\in
\PP\cap\fc_i}\atop \fa_i=\fp_{i1}\cdots\fp_{ik_i}}N(\fa_i)^{-s},$$
for any complex $s$ with $\sigma>1$ and where $\sum_{\fa_i}$ is
defined to be $1$ whenever $k_i=0$.
\end{prop}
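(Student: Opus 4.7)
The strategy is to rewrite $\mu(s)$ by classifying each principal ideal $(\alpha)$ with $\alpha$ irreducible according to the multiset of ideal classes appearing in the prime factorization of $(\alpha)$, then factor the resulting sum along the ideal classes.

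The key translation is the following. Suppose $\alpha \in \OO_K$ is nonzero, nonunit, and factor $(\alpha) = \fp_1 \fp_2 \cdots \fp_n$ into primes (with repetition). Then $\alpha$ is irreducible if and only if no proper, nonempty sub-collection of the $\fp_j$ has a principal product: for if $\fp_{j_1}\cdots \fp_{j_k} = (\beta)$ properly, the complementary sub-product is also principal, giving a nontrivial factorization of $\alpha$ in $\OO_K$; conversely, any proper factorization $\alpha = \beta\gamma$ induces such a sub-product from the primes dividing $(\beta)$. Now, let $k_i$ count those $\fp_j$ lying in the class $\fc_i$. Then $\prod_i \fc_i^{k_i} = 1$ since $(\alpha)$ is principal, and ``no proper sub-product principal'' translates exactly to the ``$\overset{min}{=}1$'' condition, so $(k_1,\dots,k_h) \in \D_m$ with $m = k_1+\cdots+k_h \le D$.

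Next I would group the primes of $(\alpha)$ by class: given $\underline{k} \in \D_m$, an ideal $(\alpha)$ of ``type $\underline{k}$'' factors uniquely as $(\alpha) = \fa_1\fa_2\cdots\fa_h$, where $\fa_i$ is the product of the $k_i$ primes of $(\alpha)$ that lie in $\fc_i$ (with $\fa_i = \OO_K$ when $k_i=0$). Conversely, given any tuple $(\fa_1,\dots,\fa_h)$ in which each $\fa_i$ is a product of exactly $k_i$ primes from $\PP\cap\fc_i$, the product $\fa_1\cdots\fa_h$ has class $\prod \fc_i^{k_i} = 1$, hence is a principal ideal $(\alpha)$, and by the minimality of $\underline{k}$ its generator is irreducible. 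This yields a bijection between irreducible-generated principal ideals of type $\underline{k}$ and such tuples. Since $|N(\alpha)| = N((\alpha)) = \prod_i N(\fa_i)$, the norm factors multiplicatively along the tuple.

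Putting it together, for $\sigma > 1$ I would split
\[
\mu(s) = \sum_{m=1}^{D} \sum_{\underline{k}\in \D_m} \sum_{(\alpha)\text{ of type }\underline{k}} |N(\alpha)|^{-s}
\]
and then use the bijection plus the factorization of the norm to rewrite the innermost sum as the product over $i$ displayed in the proposition. The interchange of summations and the reorganization into a product is justified by absolute convergence for $\sigma>1$, which follows from domination by the Dedekind zeta function $\zeta_K(\sigma)$ since every term on either side is a subseries of $\sum_{\fa} N(\fa)^{-\sigma}$.

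The main obstacle is the equivalence ``$\alpha$ irreducible $\Leftrightarrow$ prime factorization of $(\alpha)$ is class-minimal.'' One direction is essentially definitional; the subtle direction requires noticing that a proper principal sub-product of $(\alpha)$ produces a factorization $\alpha = \beta\gamma$ in $\OO_K$ in which neither factor is a unit, which uses that the ideal $(\beta)$ is a proper nontrivial divisor of $(\alpha)$ in $\OO_K$. The rest is bookkeeping: confirming that the type map is a bijection (no choice of ordering is needed because we record only the unordered multiset $\fa_i$ of primes in each class) and justifying the rearrangement of series.
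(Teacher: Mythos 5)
Your proposal is correct and takes essentially the same route as the paper's proof: both partition the irreducible-generated principal ideals according to the class-type $\underline{k}\in\D_m$ of their prime factorization, identify irreducibility of $\alpha$ with the minimality condition $\prod_i\fc_i^{k_i}\overset{min}{=}1$, and use unique factorization of ideals together with multiplicativity of the norm to turn the sum over such ideals into the stated product of sums. Your write-up is, if anything, a bit more explicit than the paper's about why ``no proper principal sub-product'' is equivalent to irreducibility.
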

\begin{proof}  For $\underline{k}\in\mathcal{D}_m$, define
$$\mathcal{A}_{\underline{k}}=\{\fa~:~\fa=\fa_1\cdots\fa_h,
\fa_i=\fp_{i1}\cdots\fp_{ik_i},~~ \mbox{some}~~
\fp_{ij}\in\mathcal{P}\cap \fc_i\},$$ where $\fa_i=1$, if $k_i=0$.
Now let $\mathcal{A}=\cup \mathcal{A}_{\underline{k}}$ where the
union is over all $\underline{k}$ in $\cup_m \mathcal{D}_m$. By
the uniqueness of the factorization of ideals into prime ideals,
we see that this union is disjoint. Moreover, by the
multiplicativity of the norms, we have
$$\sum_{m=1}^D\sum_{\underline{k}\in\mathcal{D}_m}\prod_{i=1}^h
\sum_{\fa_i}N\fa_i^{-s}=\sum_{\fa\in\mathcal{A}}N\fa^{-s},$$ where
 $\fa_i$ are as above in the definition of
$\mathcal{A}_{\underline{k}}$. Now notice that if
$\fa\in\mathcal{A}$, then $\fa\in\mathcal{A}_{\underline{k}}$ for
some $\underline{k}\in\mathcal{D}_m$. Thus the ideal class $[\fa]$
containing $\fa$ satisfies
$$[\fa]=\prod_{i=1}^h{\fc_i}^{k_i}\overset{min}{=}1,$$ by definition
of $\mathcal{D}_m$. Hence $\fa=(\alpha)$ for some nonzero, nonunit
integer $\alpha$ in $K$. But notice that $\alpha$ must be
irreducible for otherwise $$[\fa]=\prod_{i=1}^h{\fc_i}^{k_i}=1,$$
would not be a minimal representation of $1$.

Conversely, if $\alpha$ is irreducible, then
$(\alpha)\in\mathcal{A}_{\underline{k}}$ for some $\underline{k}$;
namely, $$(\alpha)=\prod_{i=1}^h\prod_{j=1}^{k_i}\fp_{ij},$$ for
some $k_i\in\N_0$ and $\fp_{ij}\in\mathcal{P}\cap\fc_i.$
\end{proof}

Next, we examine the right-hand sum in the proposition above. To
this end we define the following family of polynomials.

\begin{df} Let $k$ be a positive integer and $z_1,\cdots,z_k$
independent variables. Then
$$P_k(\underline{z})=P_k(z_1,\cdots,z_k)=
\sum_{(\nu_1,\cdots,\nu_k)\in\N_0^k\atop \sum
j\nu_j=k}\frac1{\nu_1!\cdots\nu_k!\, 1^{\nu_1}\cdots
k^{\nu_k}}z_1^{\nu_1}\cdots z_k^{\nu_k}.$$ Moreover, let
$$P_0(\underline{z})=1.$$
\end{df}

\begin{prop}\label{P2} Let $k$ be a positive integer
and $x_1,x_2,x_3,\cdots$ be a sequence of independent variables.
Moreover, for $j=1,\cdots, k$, let $s_j=\sum_{i=1}^\infty x_i^j.$
Then
$$\sum_{(n_1,\cdots,n_k)\in\N^k \atop n_1\leq\cdots\leq n_k}x_{n_1}\cdots
x_{n_k}=P_k(s_1,\cdots,s_k).$$
\end{prop}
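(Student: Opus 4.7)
The stated identity is the classical expression of the complete homogeneous symmetric polynomial $h_k$ in the variables $x_1,x_2,\ldots$ in terms of the power sums $s_j=\sum_{i\geq 1}x_i^j$. The plan is to prove it via a generating function in an auxiliary indeterminate $t$.

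First, I would consider
$$F(t)=\prod_{i\geq 1}(1-x_it)^{-1},$$
viewed as a formal power series in $t$. Expanding each factor geometrically and collecting like powers of $t$, one checks that the coefficient of $t^k$ in $F(t)$ is exactly $\sum_{n_1\leq\cdots\leq n_k}x_{n_1}\cdots x_{n_k}$, which is the left-hand side of the proposition.

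Next, I would take formal logarithms to convert the product into a sum of power sums:
$$\log F(t)=-\sum_{i\geq 1}\log(1-x_it)=\sum_{i\geq 1}\sum_{j\geq 1}\frac{x_i^jt^j}{j}=\sum_{j\geq 1}\frac{s_jt^j}{j}.$$
Exponentiating back and expanding each factor of the resulting product,
$$F(t)=\exp\Bigl(\sum_{j\geq 1}\frac{s_jt^j}{j}\Bigr)=\prod_{j\geq 1}\sum_{\nu_j\geq 0}\frac{1}{\nu_j!}\Bigl(\frac{s_j}{j}\Bigr)^{\nu_j}t^{j\nu_j}.$$
The coefficient of $t^k$ on the right is then $\sum\prod_{j\geq 1}\frac{s_j^{\nu_j}}{\nu_j!\,j^{\nu_j}}$, where the outer sum is over tuples $(\nu_j)_{j\geq 1}$ of nonnegative integers with $\sum_j j\nu_j=k$. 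Any such tuple forces $\nu_j=0$ whenever $j>k$, so this expression is precisely $P_k(s_1,\ldots,s_k)$, matching the two expressions for the coefficient of $t^k$ in $F(t)$.

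The main subtlety, which is routine but should be addressed, is justifying the manipulations involving infinitely many $x_i$. The cleanest route is to first fix $N$ and work entirely inside $\Z[x_1,\ldots,x_N][[t]]$, where each step above is plainly rigorous. Since every monomial of degree $k$ appearing on either side of the claimed identity involves at most $k$ distinct indices, the identity for any $N\geq k$ already implies the full identity as stated. Alternatively one can proceed by induction on $k$ using Newton's recurrence between $h_k$ and the $s_j$, but the generating function argument is shorter and more transparent.
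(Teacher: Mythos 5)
Your proof is correct, but it takes a genuinely different route from the one in the paper. You identify the left-hand side as the complete homogeneous symmetric function $h_k$ and derive the identity from the generating function $\prod_{i\ge 1}(1-x_it)^{-1}=\exp\bigl(\sum_{j\ge 1}s_jt^j/j\bigr)$, extracting the coefficient of $t^k$ on both sides; your truncation to $N$ variables correctly disposes of the formal issues with infinitely many indeterminates. The paper instead argues combinatorially with the symmetric group $S_k$ acting on $\N^k$: it evaluates $\frac{1}{k!}\sum_{\sigma\in S_k}\sum_{\sigma\underline{m}=\underline{m}}x_{m_1}\cdots x_{m_k}$ in two ways --- once by grouping permutations into conjugacy classes (each cycle type $(\nu_1,\dots,\nu_k)$ contributes $s_1^{\nu_1}\cdots s_k^{\nu_k}$ weighted by the class size $k!/\prod\nu_j!\,j^{\nu_j}$, yielding $P_k$), and once by an orbit--stabilizer count showing the same average equals the sum over nondecreasing tuples. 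Your approach is shorter, leans on standard symmetric-function machinery, and makes the source of the coefficients $1/(\nu_1!\cdots\nu_k!\,1^{\nu_1}\cdots k^{\nu_k})$ transparent via the exponential; the paper's approach is self-contained at the level of elementary group theory (cycle decompositions, conjugacy class sizes, stabilizers) and avoids introducing an auxiliary variable and formal logarithms altogether. Both are complete proofs of the proposition.
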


\begin{proof} First we introduce some notation.  Let
$\underline{x}_{\underline{n}}= x_{n_1}\cdots x_{n_k}$\\ for any
$\underline{n}=(n_1,\cdots,n_k)\in\N^k.$ Let
$T=\{\underline{n}\in\N^k:n_1\leq\cdots\leq n_k\}.$ Also, let
$S_k$ be the symmetric group on $\{1,\cdots,k\}$; for $\sigma\in
S_k$, let
$\sigma\underline{n}=(n_{\sigma(n_1)},\cdots,n_{\sigma(n_k)}).$
Next, let $C=C(\sigma)$ be the conjugacy class of $\sigma$ in
$S_k$, i.e. $C(\sigma)=\{\gamma\sigma\gamma^{-1}: \gamma\in
S^k\}.$ Let $$\sigma=\prod_{j=1}^k\eta_{j1}\cdots\eta_{j\nu_j}$$
be a factorization of $\sigma$ into disjoint cycles, where
$\nu_j\in \N_0$ and for each $j$ and  $i=1,\cdots,\nu_j$, the
permutations $\eta_{ji}$ are the distinct $j$-cycles, say
$\eta_{ji}=(a_{ji1}\cdots a_{jij})$ with $a_{ji\ell}\in
\{1,\cdots,k\},$ and with the convention that $1$-cycles are
included so that $\cup_{j,i}\{a_{ji1},\cdots,
a_{jij}\}=\{1,\cdots,k\}.$ Recall that $\tau\in C(\sigma)$ if and
only if $\tau$ has the same type of cycle decomposition, i.e. if
$$\tau=\prod_{j=1}^k\eta_{j1}'\cdots\eta_{j\nu'_j}'$$ into
disjoint cycles with the same conventions as above, then
$\nu'_j=\nu_j$ for $j=1,\cdots,k,$ (see, for example \cite{DF}).
Notice then that a conjugacy class in $S_k$ is determined uniquely
by a $k$-tuple, $(\nu_1,\cdots, \nu_k)\in {\N_0^k}$ with
$\sum_{j=1}^kj\nu_j=k.$ Any permutation in the conjugacy class has
a cycle decomposition determined by the $\nu_j$'s as above.
Moreover, recall that $$\# C(\sigma) =\frac{k!}{\nu_1!\cdots
\nu_k! 1^{\nu_1}\cdots k^{\nu_k}},$$ again see \cite{DF}.
Furthermore, recall that the cardinality of the orbit of
$\underline{n}$ under $S_k$,
$S_k\underline{n}=\{\eta\underline{n}:\eta\in S_k\},$ is equal to
$|S_k|/|S_k(\underline{n})|$ where $S_k(\underline{n})=\{\eta\in
S_k: \eta\underline{n}=\underline{n}\}$, the stabilizer  subgroup
of $\underline{n}.$  Moreover, if $\underline{m}\in
S_k\underline{n}$, then the stabilizer subgroups,
$S_k(\underline{m})$ and $S_k(\underline{n})$, are conjugate and
thus have the same cardinality.

Now for the proof: Notice that
$$\frac1{k!}\sum_{\sigma\in S_k}\sum_{\underline{m}\in\N^k\atop
\sigma\underline{m}=\underline{m}}\underline{x}_{\underline{m}}=
\frac1{k!}\sum_{C}\sum_{\sigma\in
C}\sum_{\underline{m}\in\N^k\atop
\sigma\underline{m}=\underline{m}}\underline{x}_{\underline{m}},$$
where $\sum_C$ is the sum over the conjugacy classes of $S_k$. Now
notice that if we write
$\sigma=\prod_{j=1}^k\eta_{j1}\cdots\eta_{j\nu_j}$ as above, then
$$\sum_{\underline{m}\in\N^k\atop
\sigma\underline{m}=\underline{m}}\underline{x}_{\underline{m}}=s_1^{\nu_1}
\cdots s_k^{\nu_k},$$ which is independent of the choice of
$\sigma\in C.$ Hence
$$\frac1{k!}\sum_{C}\sum_{\sigma\in
C}\sum_{\underline{m}\in\N^k\atop
\sigma\underline{m}=\underline{m}}\underline{x}_{\underline{m}}=
\frac1{k!}\sum_{C}|C|\sum_{\underline{m}\in\N^k\atop
\sigma\underline{m}=\underline{m}}\underline{x}_{\underline{m}}$$
$$ =\frac1{k!} \sum_{(\nu_1,\cdots,\nu_k)\in\N_0^k\atop
\sum_j\nu_j=k}\frac{k!}{\nu_1!\cdots \nu_k! 1^{\nu_1}\cdots
k^{\nu_k}}s_1^{\nu_1}\cdots s_k^{\nu_k}=P_k(s_1,\cdot,s_k).$$

On the other hand, $$\sum_{\underline{n}\in
T}\underline{x}_{\underline{n}}=\sum_{\underline{n}\in
T}\frac1{|S_k\underline{n}|} \sum_{\underline{m}\in
S_k\underline{n}}\underline{x}_{\underline{m}},$$ since
$\underline{x}_{\underline{m}}=\underline{x}_{\underline{n}}$ for
any $\underline{m}\in S_k\underline{n}.$ Hence
$$\sum_{\underline{n}\in T}\underline{x}_{\underline{n}}=
\sum_{\underline{m}\in\N^k}\sum_{\underline{n}\in T\atop
\underline{m}\in
S_k\underline{n}}\frac1{|S_k\underline{n}|}\;\underline{x}_{\underline{m}}=
\frac1{k!}\sum_{\underline{m}\in\N^k}\sum_{\underline{n}\in T\atop
\underline{m}\in
S_k\underline{n}}|S_k(\underline{n})|\;\underline{x}_{\underline{m}}$$
$$=\frac1{k!}\sum_{\underline{m}\in\N^k}\sum_{\underline{n}\in
T\atop \underline{m}\in
S_k\underline{n}}|S_k(\underline{m})|\underline{x}_{\underline{m}}=
\frac1{k!}\sum_{\underline{m}\in\N^k}\sum_{\underline{n}\in T\atop
\underline{m}\in S_k\underline{n}}\sum_{\sigma\in
S_k(\underline{m})}\underline{x}_{\underline{m}}$$ $$=
\frac1{k!}\sum_{\underline{m}\in\N^k}\sum_{\sigma\in
S_k(\underline{m})}\underline{x}_{\underline{m}}\sum_{\underline{n}\in
T\atop \underline{m}\in S_k\underline{n}}1.$$ But
$$\sum_{\underline{n}\in T\atop \underline{m}\in
S_k\underline{n}}1=1,$$ since only one permutation of
$\underline{m}$ can belong to $T$. Therefore,
$$\sum_{\underline{n}\in
T}\underline{x}_{\underline{n}}=\frac1{k!}\sum_{\underline{m}\in\N^k}
\sum_{\sigma\in S_k\atop
\sigma\underline{m}=\underline{m}}\underline{x}_{\underline{m}}=P_k(s_1,
\cdots, s_k),$$ from above, as desired.

\end{proof}

\begin{prop}\label{P3} Let $k$ be a nonnegative integer
and $\fc$ any class in $\Cl$. Then $$\sum_{{\fa \atop \exists \,
\fp_{1},\cdots,\fp_{k}\in \PP\cap\fc}\atop
\fa=\fp_{1}\cdots\fp_{k}}N(\fa)^{-s}= P_k(\underline{z}),$$ where
$$z_j=\sum_{\fp\in\PP\cap\fc}N\fp^{-js},$$ for all
$Re(s)=\sigma>1.$
\end{prop}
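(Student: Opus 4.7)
The plan is to reduce the statement directly to Proposition \ref{P2} by means of the unique factorization of ideals in $\OO_K$.

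First I would fix an enumeration $\fp_1,\fp_2,\ldots$ of the (countably many) primes lying in $\PP\cap\fc$ and set $x_i := N(\fp_i)^{-s}$ for each $i\ge 1$. By unique factorization of ideals into prime ideals, an ideal $\fa$ contributes to the sum on the left-hand side if and only if $\fa$ is the product of some multiset of $k$ primes drawn from $\PP\cap\fc$, and each such $\fa$ corresponds to one and only one nondecreasing tuple $(n_1,\ldots,n_k)\in\N^k$, namely the one for which $\fa=\fp_{n_1}\fp_{n_2}\cdots\fp_{n_k}$. Multiplicativity of the ideal norm then gives $N(\fa)^{-s}=x_{n_1}\cdots x_{n_k}$, so (for $k\ge 1$) the left-hand side equals
$$\sum_{n_1\le n_2\le\cdots\le n_k} x_{n_1}x_{n_2}\cdots x_{n_k}.$$

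By Proposition \ref{P2} this last sum is $P_k(s_1,\ldots,s_k)$, where
$$s_j=\sum_{i=1}^\infty x_i^j=\sum_{i=1}^\infty N(\fp_i)^{-js}=\sum_{\fp\in\PP\cap\fc}N(\fp)^{-js}=z_j,$$
and the identification with $P_k(\underline z)$ follows. The edge case $k=0$ is handled separately: the only ``product of zero primes'' is the unit ideal $\OO_K$, whose norm is $1$, so the left-hand side is $1$, matching the convention $P_0(\underline z)=1$.

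The only delicate point, and hence the main obstacle, is justifying that the formal identity of Proposition \ref{P2} transfers to our analytic setting: its proof is essentially combinatorial and treats the $x_i$ as independent indeterminates, so we must know that all the rearrangements involved are valid for the specific complex values $x_i=N(\fp_i)^{-s}$ that we have substituted. For $\sigma>1$ this is standard: the series $\sum_\fp N(\fp)^{-\sigma}$ is dominated by $\zeta_K(\sigma)<\infty$, whence all the multiple series in sight converge absolutely and may be reordered at will. With this observation in hand the reduction to Proposition \ref{P2} is complete.
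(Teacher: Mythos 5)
Your proposal is correct and follows essentially the same route as the paper: enumerate $\PP\cap\fc$, set $x_n=N\fp_n^{-s}$, invoke unique factorization of ideals and multiplicativity of the norm to identify the left-hand side with the sum over nondecreasing tuples in Proposition \ref{P2}, and justify the rearrangements by absolute convergence for $\sigma>1$. Your treatment of the $k=0$ case and the convergence remark matches the paper's, just stated in more detail.
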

\begin{proof} For $k=0$, both sides are equal to 1, for the
left-hand side consists of one term, $\fa=\OO_K$ which has norm
equal to 1.

Assume $k>0.$ Write $\mathcal{P}\cap \fc=\{\fp_n:n\in\N\}.$ For
any $n\in\N$, let $x_n=N\fp_n^{-s}$. Then the proposition follows
directly from Proposition \ref{P2}, once we observe that $N$ is
multiplicative and all series involved converge absolutely, since
$\sigma>1$.
\end{proof}

 We now have the following useful corollary to
Proposition \ref{P3}.

\begin{cor}\label{C1} $$\mu(s)=\sum_{(\alpha)\atop \alpha~
irred.}|N(\alpha)|^{-s}=\sum_{m=1}^D~\sum_{\underline{k}\in\D_m}~
\prod_{i=1}^hP_{k_i}(z_{i1},\cdots,z_{ik_i}),$$ where
$$z_{ij}=\sum_{\fp_i\in\PP\cap\fc_i}N\fp_i^{-js}.$$
\end{cor}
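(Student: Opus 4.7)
The plan is to simply chain Proposition \ref{P1} and Proposition \ref{P3}. First I would invoke Proposition \ref{P1} to rewrite $\mu(s)$ as
$$\mu(s)=\sum_{m=1}^D\sum_{\underline{k}\in\D_m}\prod_{i=1}^h S_i(\underline{k},s),$$
where $S_i(\underline{k},s)$ denotes the inner sum
$$S_i(\underline{k},s)=\sum_{{\fa_i : \exists\,\fp_{i1},\dots,\fp_{ik_i}\in\PP\cap\fc_i}\atop\fa_i=\fp_{i1}\cdots\fp_{ik_i}} N(\fa_i)^{-s},$$
with the convention that $S_i(\underline{k},s)=1$ when $k_i=0$. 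Since Proposition \ref{P1} is already proved and gives this identity for $\sigma>1$, no work is needed at this step beyond writing it down.

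Next, for each fixed $i$ and each fixed $\underline{k}\in\D_m$, I would observe that $S_i(\underline{k},s)$ is literally the left-hand side of Proposition \ref{P3} with $k=k_i$ and $\fc=\fc_i$. Hence
$$S_i(\underline{k},s)=P_{k_i}(z_{i1},\dots,z_{ik_i}),\qquad z_{ij}=\sum_{\fp\in\PP\cap\fc_i}N\fp^{-js}.$$
Substituting this into the product over $i$ yields the claimed formula.

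The only small point to check is the boundary case $k_i=0$: Proposition \ref{P1} defines the inner sum to be $1$ in that case, while the definition of $P_k$ gives $P_0(\underline{z})=1$, so the two conventions agree and the factor simply contributes $1$ to the product, matching our substitution. Absolute convergence of the series $z_{ij}$ for $\sigma>1$ (inherited from the Dedekind zeta function) justifies the manipulation termwise, so there is no analytic subtlety.

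There is really no main obstacle here; the corollary is a direct combination of the two preceding propositions, and the only thing to watch is that the indexing of the arguments of $P_{k_i}$ matches (the $k_i$ variables $z_{i1},\dots,z_{ik_i}$ correspond to the power sums $\sum_{\fp\in\PP\cap\fc_i}N\fp^{-js}$ for $j=1,\dots,k_i$), which is exactly the substitution dictated by Proposition \ref{P3} applied to the class $\fc_i$.
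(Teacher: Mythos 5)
Your proposal is correct and is exactly the argument the paper intends: the corollary is stated as an immediate consequence of Proposition \ref{P1} combined with Proposition \ref{P3} applied factor by factor, and your handling of the $k_i=0$ convention matches the paper's definitions of the empty sum and of $P_0$.
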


For the next proposition, write
$$z_{i1}=\sum_{\fp_i\in\PP\cap\fc_i}N\fp_i^{-s}=\ell +g_i,$$ where
$$\ell=\frac1{h}\log(\frac1{s-1}),$$ and $$g_i=g_i(s).$$ It is
well known that $g_i(s)$ is regular at $s=1.$  We then have

\begin{prop} \label{P4} $$\mu(s)=\sum_{\mu=0}^Dc_\mu~ \ell^\mu,$$
 where $$c_\mu=\sum_{m=\max(1,\mu)}^D~\sum_{\underline{k}\in\D_m}
a_{\underline{k},\mu},$$ where if
$\underline{k}=(k_1,\cdots,k_h)$, then
$$a_{\underline{k},\mu}=\underset{\mu_1+\cdots+\mu_h=\mu}
{\sum_{\mu_1=0}^{k_1}\cdots\sum_{\mu_h=0}^{k_h}}~
\prod_{i=1}^hb_{{k_i},\mu_i},$$ with
$$b_{{k_i},\mu_i}=\sum_{\nu_{i1}=\mu_i}^{k_i}
\frac{g_i^{\nu_{i1}-\mu_i}}{\mu_i!\,(\nu_{i1}-\mu_i)!}~
\rho_{k_i,\nu_{i1}},$$  where
$$\rho_{k_i,\nu_{i1}}=
\sum_{(\nu_{i2},\cdots,\nu_{ik_i})\in\N_0^{k_i-1}\atop \sum
j\nu_{ij}=k_i-\nu_{i1}}\frac{1}{\nu_{i2}!\cdots \nu_{ik_i}!~
2^{\nu_{i2}}\cdots k_i^{\nu_{ik_i}}}z_{i2}^{\nu_{i2}}\cdots
z_{ik_i}^{\nu_{ik_i}},$$ if $k_i>1$,  and we define
$\rho_{0,0}=1$, $\rho_{1,1}=1,$ and $\rho_{1,0}=0$.
\end{prop}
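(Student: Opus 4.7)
The plan is to start from Corollary \ref{C1}, which expresses $\mu(s)$ as a sum over $m$ and $\underline{k} \in \D_m$ of the products $\prod_{i=1}^h P_{k_i}(z_{i1}, \cdots, z_{ik_i})$, and to extract the dependence on $\ell$ by expanding each factor $P_{k_i}$ around the substitution $z_{i1} = \ell + g_i$. Since $\ell$ enters only through $z_{i1}$, and the remaining variables $z_{i2}, \cdots, z_{ik_i}$ are regular at $s = 1$ and play a purely combinatorial role, the whole argument reduces to a one-variable binomial expansion inside each factor, followed by two routine interchanges of summation.

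First I would fix $i$ and $k_i \geq 2$ and reorganize $P_{k_i}$ by grouping its defining sum according to the exponent $\nu_{i1}$ of $z_{i1}$, obtaining
$$P_{k_i}(z_{i1}, \cdots, z_{ik_i}) = \sum_{\nu_{i1} = 0}^{k_i} \frac{z_{i1}^{\nu_{i1}}}{\nu_{i1}!}\, \rho_{k_i, \nu_{i1}},$$
where $\rho_{k_i, \nu_{i1}}$ is precisely the quantity in the statement. (The conventions $\rho_{0,0} = \rho_{1,1} = 1$ and $\rho_{1,0} = 0$ follow from direct inspection of $P_0 = 1$ and $P_1(z_{i1}) = z_{i1}$, and make the formula above hold uniformly in $k_i \geq 0$.) Substituting $z_{i1} = \ell + g_i$, applying the binomial theorem to $(\ell + g_i)^{\nu_{i1}}$, and then interchanging the order of summation so that powers of $\ell$ are collected yields
$$P_{k_i} = \sum_{\mu_i = 0}^{k_i} \ell^{\mu_i} \sum_{\nu_{i1} = \mu_i}^{k_i} \frac{g_i^{\nu_{i1} - \mu_i}}{\mu_i!\,(\nu_{i1} - \mu_i)!}\, \rho_{k_i, \nu_{i1}} = \sum_{\mu_i = 0}^{k_i} b_{k_i, \mu_i}\, \ell^{\mu_i},$$
using the identity $\binom{\nu_{i1}}{\mu_i}/\nu_{i1}! = 1/[\mu_i!\,(\nu_{i1} - \mu_i)!]$, with $b_{k_i, \mu_i}$ as in the statement.

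It then remains to multiply across $i$ and sum over $\underline{k}$ and $m$. Distributing the product gives, for each $\underline{k} \in \D_m$,
$$\prod_{i=1}^h P_{k_i} = \sum_{\mu = 0}^{m} \ell^{\mu} \underset{\mu_1 + \cdots + \mu_h = \mu}{\sum_{\mu_1 = 0}^{k_1} \cdots \sum_{\mu_h = 0}^{k_h}} \prod_{i=1}^h b_{k_i, \mu_i} = \sum_{\mu = 0}^{m} a_{\underline{k}, \mu}\, \ell^{\mu}.$$
Substituting into Corollary \ref{C1} and moving the $\mu$-sum to the outside then yields $\mu(s) = \sum_{\mu} c_{\mu}\, \ell^{\mu}$ with $c_{\mu}$ as claimed. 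The lower bound $m \geq \max(1, \mu)$ arises because the original sum starts at $m = 1$ and because $a_{\underline{k}, \mu}$ can only be nonzero when $\mu \leq k_1 + \cdots + k_h = m$, while the upper bound $\mu \leq D$ follows from $\D_m$ being empty for $m > D$.

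The main obstacle is purely notational: keeping the four nested summation indices $(m, \underline{k}, \mu, \underline{\nu})$ consistent and correctly handling the boundary cases $k_i \in \{0, 1\}$, so that the identity $P_{k_i} = \sum_{\mu_i} b_{k_i, \mu_i}\, \ell^{\mu_i}$ is valid uniformly in $k_i \geq 0$. Once this bookkeeping is in place, the proposition follows by a chain of routine rearrangements of finite sums.
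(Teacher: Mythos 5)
Your proposal is correct and follows essentially the same route as the paper: group the defining sum of $P_{k_i}$ by the exponent $\nu_{i1}$ of $z_{i1}$ to isolate $\rho_{k_i,\nu_{i1}}$, binomially expand $(\ell+g_i)^{\nu_{i1}}$, collect powers of $\ell$ to get the $b_{k_i,\mu_i}$, distribute the product over $i$ to get the $a_{\underline{k},\mu}$, and finally interchange the $m$- and $\mu$-sums. Your extra attention to the boundary cases $k_i\in\{0,1\}$ and to why $m$ runs from $\max(1,\mu)$ is consistent with (and slightly more explicit than) the paper's argument.
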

\begin{proof}  First use the definition of the polynomials
$P_k(\underline{z})$ to expand $\mu(s)$ in Proposition \ref{P3},
where the indices of summation are $\nu_{ij}$ for $i=1,\cdots,h$
and $j=1,\cdots,k_i$. Hence $$\mu(s)=$$
$$\sum_{m=1}^D\sum_{(k_1,\cdots,k_h)\in\mathcal{D}_m}
\prod_{i=1}^h\sum_{(\nu_{i1},\cdots,\nu_{ik_i})\atop \sum
j\nu_{ij}=k_i}\frac1{\nu_{i1}!\cdots\nu_{ik_i}!1^{\nu_{i1}}\cdots
k!^{\nu_{ik_i}}}(\ell+g_i)^{\nu_{i1}}{z_{2i}}^{\nu_{i2}}\cdots
{z_{k_ii}}^{\nu_{ik_i}},$$  where
$\sum_{(\nu_{i1},\cdots,\nu_{ik_i})}\cdots =1$, if $k_i=0$. Now in
the right-hand most sum above, sum  over the $\nu_{i1}$ first in
which case we get $$\sum_{(\nu_{i1},\cdots,\nu_{ik_i})\atop \sum
j\nu_{ij}=k_i}\frac1{\nu_{i1}!\cdots\nu_{ik_i}!1^{\nu_{i1}}\cdots
k!^{\nu_{ik_i}}}(\ell+g_i)^{\nu_{i1}}{z_{2i}}^{\nu_{i2}}\cdots
{z_{k_ii}}^{\nu_{ik_i}}=$$  $$
\sum_{\nu_{i1}=0}^{k_i}\frac{(\ell+g_i)^{\nu_{i1}}}{\nu_{i1}!}
\rho_{k_i,\nu_{i1}},$$ with $\rho$ as defined in the statement of
the proposition. Next, expand
$z_{i1}^{\nu_{i1}}=(\ell+g_i)^{\nu_{i1}}$ as
$$\sum_{\mu_i=0}^{\nu_{i1}}{\nu_{i1}\choose
\mu_i}\ell^{\mu_i}g_i^{\nu_{i1}-\mu_i}.$$ Then
$$\sum_{\nu_{i1}=0}^{k_i}\frac{(\ell+g_i)^{\nu_{i1}}}{\nu_{i1}!}
\rho_{k_i,\nu_{i1}}=\sum_{\nu_{i1}=0}^{k_i}\frac1{\nu_{i1}!}
\sum_{\mu_i=0}^{\nu_{i1}}\left(\nu_{i1}\atop
\mu_i\right)g_i^{\nu_{i1}-\mu_i}\rho_{k_i,\nu_{i1}}\ell^{\mu_i}=
\sum_{\mu_i=0}^{k_i}b_{k_i,\mu_i}\ell^{\mu_i},$$ where the $b$ are
defined as above.  But then
$$\prod_{i=1}^h\sum_{\mu_i=0}^{k_i}b_{k_i,\mu_i}\ell^{\mu_i}=
\sum_{\mu_1=0}^{k_1}\cdots\sum_{\mu_h=0}^{k_h}
\prod_{i=1}^hb_{k_i,\mu_i}\ell^{\mu_1+\cdots+\mu_h}=
\sum_{\mu=0}^ma_{\underline{k},\mu}\ell^\mu,$$ with the $a$ as
defined above.

But now $$\sum_{\underline{k}\in
\mathcal{D}_m}\sum_{\mu=0}^ma_{\underline{k},\mu}\ell^\mu=
\sum_{\mu=0}^m\sum_{\underline{k}\in
\mathcal{D}_m}a_{\underline{k},\mu}\ell^\mu.$$ Hence
$$\mu(s)=\sum_{m=1}^D\sum_{\mu=0}^m\sum_{\underline{k}\in
\mathcal{D}_m}a_{\underline{k},\mu}\ell^\mu=\sum_{\mu=0}^D
\left(\sum_{m=\max (1,\mu)}^D\sum_{\underline{k}\in
\mathcal{D}_m}a_{\underline{k},\mu}\right)\ell^\mu,$$ as desired.
\end{proof}

Now we rewrite the $a_{\underline{k},\mu}$ in Proposition \ref{P4}
in a form more convenient for winning an explicit formula for
$c_\mu$ for ``large" $\mu$.

\begin{cor}\label{C2} $$\mu(s)=\sum_{\mu=0}^Dc_\mu~ \ell^\mu,$$
where
$$c_\mu=\sum_{\nu=max(1,\mu)-\mu}^{D-\mu}\;\sum_{\underline{k}\in\D_{\mu+\nu}}
a_{\underline{k},\mu},$$ with
$$a_{\underline{k},\mu}=\underset{\nu_1+\cdots+\nu_h=\nu}
{\sum_{\nu_1=0}^{k_1}\cdots\sum_{\nu_h=0}^{k_h}}~
\prod_{i=1}^h\frac1{k_i!}~ \sum_{\lambda_i=0}^{\nu_i}\frac{k_i!}
{(\nu_i-\lambda_i)!(k_i-\nu_i)!}g_i^{\nu_i-\lambda_i}
\rho_{k_i,k_i-\lambda_i},$$ where (as above)
$$\rho_{k_i,k_i-\lambda_i}=
\sum_{(\nu_{i2},\cdots,\nu_{ik_i})\in\N_0^{k_i-1}\atop \sum
j\nu_{ij}=\lambda_i}\frac{1}{\nu_{i2}!\cdots \nu_{ik_i}!~
2^{\nu_{i2}}\cdots k_i^{\nu_{ik_i}}}z_{i2}^{\nu_{i2}}\cdots
z_{ik_i}^{\nu_{ik_i}}.$$
\end{cor}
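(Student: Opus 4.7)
The plan is to show that Corollary \ref{C2} is nothing more than Proposition \ref{P4} written in re-indexed form, via two successive changes of summation variable. There is no new content; the work is purely combinatorial bookkeeping.

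First I would reindex the outer sum. In Proposition \ref{P4} the parameter $m$ (the size $k_1+\cdots+k_h=m$ of tuples in $\D_m$) runs from $\max(1,\mu)$ to $D$. Setting $\nu := m-\mu$ turns this into a sum over $\nu$ from $\max(1,\mu)-\mu$ to $D-\mu$, with $\underline{k}\in\D_{\mu+\nu}$, matching the shape of the outer sum in Corollary \ref{C2}.

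Next I would transform the inner expression $a_{\underline{k},\mu}$. Starting from
$$a_{\underline{k},\mu}=\underset{\mu_1+\cdots+\mu_h=\mu}{\sum_{\mu_1=0}^{k_1}\cdots\sum_{\mu_h=0}^{k_h}}\prod_{i=1}^h b_{k_i,\mu_i},\qquad b_{k_i,\mu_i}=\sum_{\nu_{i1}=\mu_i}^{k_i}\frac{g_i^{\nu_{i1}-\mu_i}}{\mu_i!\,(\nu_{i1}-\mu_i)!}\,\rho_{k_i,\nu_{i1}},$$
I would substitute $\nu_i:=k_i-\mu_i$ in the outer product and $\lambda_i:=k_i-\nu_{i1}$ in $b_{k_i,\mu_i}$. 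The constraint $\sum_i\mu_i=\mu$ becomes $\sum_i\nu_i=\sum_i k_i-\mu=(\mu+\nu)-\mu=\nu$; the bounds $0\le\mu_i\le k_i$ become $0\le\nu_i\le k_i$; and $\mu_i\le\nu_{i1}\le k_i$ becomes $0\le\lambda_i\le\nu_i$. Checking the exponents: $\nu_{i1}-\mu_i=(k_i-\lambda_i)-(k_i-\nu_i)=\nu_i-\lambda_i$, so $g_i^{\nu_{i1}-\mu_i}=g_i^{\nu_i-\lambda_i}$, while $\mu_i!=(k_i-\nu_i)!$ and $(\nu_{i1}-\mu_i)!=(\nu_i-\lambda_i)!$. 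Finally, in the definition of $\rho_{k_i,\nu_{i1}}$ the constraint $\sum j\nu_{ij}=k_i-\nu_{i1}$ becomes $\sum j\nu_{ij}=\lambda_i$, so $\rho_{k_i,\nu_{i1}}=\rho_{k_i,k_i-\lambda_i}$ with the required indexing. Pulling out a common factor of $1/k_i!$ and re-inserting $k_i!$ in the numerator gives precisely the expression displayed in Corollary \ref{C2}.

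I do not expect any genuine obstacle: the only place one must be careful is in verifying that the substitution $\mu_i\mapsto k_i-\mu_i$ interacts correctly with the constraint $\sum_i\mu_i=\mu$ and with the bounds on the inner summation index $\nu_{i1}$, and in confirming that the $\rho$ terms transform consistently with the new index $\lambda_i$. Once these are tracked, the identity drops out immediately from Proposition \ref{P4}.
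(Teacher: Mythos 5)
Your proposal is correct and follows exactly the paper's own (sketched) proof: the three substitutions $\nu=m-\mu$, $\nu_i=k_i-\mu_i$, $\lambda_i=k_i-\nu_{i1}$ are precisely the change of variables the authors indicate, and your verification of how the bounds, constraints, factorials, and $\rho$ terms transform is accurate. You have simply filled in the bookkeeping the paper leaves to the reader.
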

\begin{proof} (Sketch) In Propostion \ref{P4} change variables as
follows: let $\nu=m-\mu,$ let $\nu_i=k_i-\mu_i$, and let
$\lambda_i=k_i-\nu_{i1}.$
\end{proof}

 From this corollary we extract the following result.
\begin{cor}\label{C3} Let $$\mu(s)=\sum_{\mu=0}^Dc_\mu\,\ell^\mu.$$ Then

i) $$c_D=\sum_{\underline{k}\in\D_D}\,\prod_{i=1}^h\frac1{k_i!}.$$

ii)
$$c_{D-1}=\sum_{\underline{k}\in\D_{D-1}}\prod_{i=1}^h\frac1{k_i!}~
+~\sum_{\underline{k}\in\D_D}\,
\prod_{i=1}^h\frac1{k_i!}\,\sum_{j=1}^hk_jg_j.$$

iii) If $D\geq 2$, then
$$c_{D-2}=\sum_{\underline{k}\in\D_{D-2}}\,
\prod_{i=1}^h\frac1{k_i!}~+~\sum_{\underline{k}\in\D_{D-1}}\,
\prod_{i=1}^h\frac1{k_i!}\sum_{j=1}^hk_jg_j$$
$$+~\sum_{\underline{k}\in\D_D}\,\prod_{i=1}^h\frac1{k_i!}
\left(\sum_{1\leq j_1< j_2\leq
h}k_{j_1}k_{j_2}g_{j_1}g_{j_2}\,+\,\sum_{j=1}^hk_j(k_j-1)
\left(\frac1{2}g_j^2\,+\,\frac1{2}z_{j2}\right)\right).$$
\end{cor}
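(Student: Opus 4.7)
The plan is to specialize the formula for $c_\mu$ given in Corollary \ref{C2} to the three values $\mu=D,\,D-1,\,D-2$. In each case the outer sum over $\nu=m-\mu$ runs only from $0$ (or $1$, when $\mu=0$) up to $D-\mu$, so exactly one, two, and three layers of terms respectively need to be examined.

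The key preparatory step is to read off $\rho_{k_i,k_i-\lambda_i}$ at small values of $\lambda_i$. Since $\rho_{k_i,k_i-\lambda_i}$ is the sum over $(\nu_{i2},\dots,\nu_{ik_i})\in\N_0^{k_i-1}$ with $\sum_{j=2}^{k_i}j\nu_{ij}=\lambda_i$, one reads off
\[
\rho_{k_i,k_i}=1,\qquad \rho_{k_i,k_i-1}=0,\qquad \rho_{k_i,k_i-2}=\tfrac12\,z_{i2}\ \ (\text{whenever }k_i\geq 2),
\]
consistent with the stated conventions $\rho_{0,0}=1$, $\rho_{1,1}=1$, $\rho_{1,0}=0$. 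The vanishing $\rho_{k_i,k_i-1}=0$ is the mechanism that prunes most of the would-be contributions.

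With these in hand I would process the three cases in turn. For (i), $\mu=D$ forces $\nu=0$ and hence $\nu_i=\lambda_i=0$ for every $i$; every $\rho$ factor equals $1$ and the formula collapses at once to the claimed expression. For (ii), the $\nu=0$ piece reproduces the $\D_{D-1}$ summand, while the $\nu=1$ piece picks out a single index $j$ with $\nu_j=1$; since $\rho_{k_j,k_j-1}=0$, only $\lambda_j=0$ survives, and the surviving coefficient $k_j!/(1!\,(k_j-1)!)\cdot g_j=k_jg_j$ yields the $\D_D$ summand. For (iii), the $\nu=0$ and $\nu=1$ pieces reproduce the first two summands in exactly the same way, so the real work is in the $\nu=2$ piece, which splits into an \emph{asymmetric} sub-case (two distinct indices $j_1<j_2$ with $\nu_{j_1}=\nu_{j_2}=1$, for which again only $\lambda_{j_1}=\lambda_{j_2}=0$ survives, contributing $k_{j_1}k_{j_2}g_{j_1}g_{j_2}$) and a \emph{diagonal} sub-case (a single index $j$ with $\nu_j=2$, forcing $k_j\geq 2$, where $\lambda_j=1$ is again killed and the surviving $\lambda_j\in\{0,2\}$ produce $\binom{k_j}{2}g_j^2$ and $k_j(k_j-1)\cdot\tfrac12 z_{j2}$ respectively). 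Their sum equals $k_j(k_j-1)\bigl(\tfrac12 g_j^2+\tfrac12 z_{j2}\bigr)$, giving the diagonal term of (iii).

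The main obstacle is simply the careful bookkeeping in the diagonal $\nu_j=2$ case: one must notice that the naive expansion of $(\ell+g_j)^2$ (which would suggest only a $g_j^2$ contribution) is \emph{augmented} by the $\lambda_j=2$ branch of the $\lambda$-sum, which is nonzero precisely because $\rho_{k_j,k_j-2}=\tfrac12 z_{j2}$. Everything else is mechanical expansion of the multi-index sum of Corollary \ref{C2}, combined with the three $\rho$-values listed above.
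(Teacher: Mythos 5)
Your proposal is correct and is exactly the route the paper intends: the paper gives no details beyond ``a straightforward application of the previous corollary,'' and your specialization of Corollary~\ref{C2} to $\mu=D,D-1,D-2$, using $\rho_{k_i,k_i}=1$, $\rho_{k_i,k_i-1}=0$, and $\rho_{k_i,k_i-2}=\tfrac12 z_{i2}$, fills in that computation correctly (including the diagonal $\nu_j=2$ bookkeeping).
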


The proof is a straightforward application of the previous
corollary.

We further obtain the following expressions for $\mu(s)$ for some
fields with small class number.

\begin{cor}\label{C4}

i) Suppose $D=1$ whence $h=1$. Then  $$\mu(s)=\ell+g_1.$$

ii) If $D=2$ so $h=2$, say $\Cl=\{1=\fc_1,\fc_2\},$ then
$$\mu(s)=\frac1{2}\ell^2+(1+g_2)\ell +\left(g_1+\frac1{2}g_2^2+
\frac1{2}z_{22}\right).$$
\end{cor}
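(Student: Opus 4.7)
The plan is to apply Corollary~\ref{C3} directly, once the sets $\D_m$ and the value of $h$ are pinned down from $D$. The key preliminary observation is that the Davenport constant determines $h$ in these small cases: $D=1$ forces $h=1$, because any nontrivial $\fc\in\Cl$ of order $n\ge 2$ would give $(0,\dots,0,n,0,\dots,0)\in\D_n$ and hence $D\ge 2$; and $D=2$ forces $h=2$, since if $h\ge 3$ then either $\Cl$ contains an element of order $\ge 3$ (yielding $D\ge 3$) or $\Cl\cong(\Z/2)^r$ with $r\ge 2$, in which case three independent involutions sum minimally to the identity, again giving $D\ge 3$. Throughout, I follow the convention of Proposition~\ref{P1} that the index $m$ of summation starts at $1$, so any putative sum over $\D_0$ appearing in Corollary~\ref{C3} is to be read as zero.

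For part (i), with $\Cl=\{1\}$ the only minimal relation is $\fc_1=1$ itself, giving $\D_1=\{(1)\}$. Corollary~\ref{C3}(i) then produces $c_1=1/1!=1$, and Corollary~\ref{C3}(ii) produces $c_0=(1/1!)\cdot 1\cdot g_1=g_1$, so that $\mu(s)=\ell+g_1$. For part (ii), write $\Cl=\{1,\fc_2\}$ with $\fc_2^2=1$, and enumerate $\D_1=\{(1,0)\}$ and $\D_2=\{(0,2)\}$: the tuple $(2,0)$ fails minimality because its proper subproduct $(1,0)$ already equals $1$, and $(1,1)$ fails because $\fc_1\fc_2=\fc_2\neq 1$. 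Substituting into Corollary~\ref{C3} yields $c_2=1/(0!\,2!)=1/2$, $c_1=1/(1!\,0!)+(1/(0!\,2!))(0\cdot g_1+2\cdot g_2)=1+g_2$, and
$$c_0=\frac{1}{1!}\,g_1+\frac{1}{0!\,2!}\Bigl(2\cdot 1\cdot\bigl(\tfrac12 g_2^2+\tfrac12 z_{22}\bigr)\Bigr)=g_1+\tfrac12 g_2^2+\tfrac12 z_{22},$$
which assembles to the stated formula.

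The computation is essentially mechanical. The only real obstacle is the bookkeeping in Corollary~\ref{C3}(iii): one must keep the cross term $k_{j_1}k_{j_2}g_{j_1}g_{j_2}$ (which vanishes here because $k_1=0$) separate from the diagonal term $k_j(k_j-1)(\tfrac12 g_j^2+\tfrac12 z_{j2})$ (nonzero only for $j=2$), and consistently take the $\D_0$ contribution to be empty, in line with the indexing convention of Proposition~\ref{P1}.
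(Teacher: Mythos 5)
Your proof is correct and follows the same route as the paper: enumerate the minimal representations of $1$ to get $\D_1=\{(1)\}$ in case (i) and $\D_1=\{(1,0)\}$, $\D_2=\{(0,2)\}$ in case (ii), then substitute into Corollary~\ref{C3} with the convention that the $\D_0$ contribution vanishes. You merely carry out the substitution (and the verification that $D$ determines $h$ in these cases) in more explicit detail than the paper, whose proof simply states that these enumerations yield the formulas.
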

\begin{proof} In light of the formulas for the $c_\mu$ above, it
suffices to compute $\D_m$ for each of the groups listed.

Let $\Cl=\{1=\fc_1 \}$. Then we have only one minimal
representation of 1, namely $1\overset{min}{=}1$, implying that
$\D_1=\{1\}$. Using this with the previous corollary yields {\it
i)}.

Now let $\Cl=\{1=\fc_1,a=\fc_2\}$. Then we have two minimal
representations of 1, namely, $1 \overset{min}{=}1$, and
$aa\overset{min}{=}1$ implying that $\D_1=\{(1,0)\}$ and
$\D_2=\{(0,2)\},$ respectively. This yields {\it ii)}.
\end{proof}

\section{The Summatory Function $M(x)$}

Having established formal properties of the Dirichlet series
$\mu(s)$, we now use well-known results relating a Dirichlet
series to its associated summatory function as in \cite{K}. We
present the following weaker form of Kaczorowski's ``Main Lemma"
given in \cite{K}, which will be sufficiently strong for our
purposes.

Let $$f(s)=\sum_{n=1}^\infty\frac{a_n}{n^s}$$ be a Dirichlet
series where $s=\sigma+it$ with $a_n,\sigma,t$ real numbers and
$a_n\geq 0$.

As in \cite{K} we have the following definition.

\begin{df} We let $\mathcal{A}$ be the set of those Dirichlet series $f$
as above satisfying the following three additional properties:

 (i) for all $x,y\in \R$ such that $1\leq x <y$,

$$\sum_{x\leq n\leq y}a_n\leq (y-x)\log^{c_1}y +O(y^\theta),$$ for
some $c_1>0,~~ \theta <1$ where the constants depend on $f$ only.

 (ii) There exists a nonnegative  integer $k$ and
functions $g_j(s)$ for $j=0,\cdots,k$, such that
$$f(s)=\sum_{j=0}^kg_j(s)\log^j\left(\frac1{s-1}\right),$$ for
$\sigma > 1$ and such that $g_k(1)\not =0$ and $g_j(s)$ is regular
for $\sigma
>1$ and can be analytically continued to a regular function in
 the region
$\mathcal{R}$ given by $$\mathcal{R}=\left\{s=\sigma
+it~:~\sigma>1-\frac{c_2}{\log(|t|+2)}\right\}$$ for some $c_2>0$.

(iii) In the region $\mathcal{R}$ $$|g_j(s)|\ll
\log^{c_3}(|t|+3),$$ for some $c_3>0.$ \end{df}

\begin{prop}\label{ML} {\rm (Corollary to Kaczorowski's Main Lemma)}
Let\\ $f(s)=\sum_{n=1}^\infty a_nn^{-s}$ be a Dirichlet series in
class $\mathcal{A}$ as defined above. Let $S(x)=\sum_{n\leq
x}a_n,$ the summatory function associated with $f(s)$. Then for
all $\epsilon>0$ and all $x\geq e^e$,
$$S(x)=\frac{x}{\log x}\left(\sum_{j=0}^{k-1}e_j(\log\log
x)^j\right)+ O\left(\frac{x}{\log^{2-\epsilon}x}\right),$$ as
$x\rightarrow\infty$, where the $e_j$ are complex numbers given by
$$e_j=\sum_{\nu=j}^k\frac{\nu!}{j!}\,g_\nu(1)\,I_{\nu -j},$$ with
$$I_m= \frac{(-1)^m}{m!}\frac1{2\pi
i}\int_\mathcal{C}e^z(\log z)^m~dz,$$ where $\mathcal{C}$ is the
path of integration consisting of the segment $(-\infty,-1]$ on
the lower side of the real axis (so that the argument of $\log z$
is $-\pi$), the circumference of the unit circle taken
counter-clockwise, and the segment $[-1,-\infty)$ on the upper
side of the real axis.
\end{prop}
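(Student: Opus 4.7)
The plan is a standard contour-integration argument based on an effective Perron formula. Condition (i) of the class $\mathcal{A}$ provides the bound on partial sums of coefficients that is needed to write
$$S(x)=\frac{1}{2\pi i}\int_{1+1/\log x-iT}^{1+1/\log x+iT}f(s)\,\frac{x^s}{s}\,ds+O\!\left(\frac{x\log^{c_1}x}{T}+x^{\theta}\right),$$
with the truncation parameter $T$ to be fixed later (e.g.\ $T=\exp(\sqrt{\log x})$). I would then deform the vertical segment leftward into the zero-free-like region $\mathcal{R}$ of condition (ii), replacing it by a Hankel-type contour that hugs the curve $\sigma=1-c_2/\log(|t|+2)$ outside a small neighbourhood of $s=1$ and loops once around the logarithmic branch point at $s=1$.

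On the pieces of the deformed contour away from $s=1$, the elementary bound $|x^s|\leq x\exp(-c_2\log x/\log(|t|+2))$ combines with the polylogarithmic growth in condition (iii) and the expansion $f(s)=\sum_{\nu=0}^{k}g_\nu(s)\log^\nu(1/(s-1))$ from condition (ii) to yield an error of size $O(x/\log^{2-\eps}x)$ for every $\eps>0$, provided $T$ has been chosen appropriately; the short horizontal connectors close to the real axis are handled in the same way. Only the small Hankel loop surrounding $s=1$ then carries a main term, and on that loop each $g_\nu(s)$ may be replaced by $g_\nu(1)$ and $1/s$ by $1$, the Taylor remainders producing errors of the same order as above.

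The decisive computation is the substitution $s=1+z/\log x$, which sends the small Hankel loop around $s=1$ onto the fixed Hankel contour $\mathcal{C}$ around $z=0$ appearing in the definition of $I_m$. Under this substitution $x^s=xe^z$, $ds=dz/\log x$, and $\log(1/(s-1))=\log\log x-\log z$, so the $\nu$-th piece of the local expansion of $f$ contributes
$$\frac{x}{\log x}\,g_\nu(1)\,\frac{1}{2\pi i}\int_{\mathcal{C}}(\log\log x-\log z)^\nu e^z\,dz\;+\;O\!\left(\frac{x}{\log^{2-\eps}x}\right).$$
Expanding by the binomial theorem and using the identity $\frac{(-1)^m}{2\pi i}\int_{\mathcal{C}}e^z(\log z)^m\,dz=m!\,I_m$ (which is just the definition of $I_m$ rearranged) turns this into $(x/\log x)\,g_\nu(1)\sum_{m=0}^{\nu}\binom{\nu}{m}(\log\log x)^{\nu-m}m!\,I_m$. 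Summing over $\nu$ from $0$ to $k$ and re-indexing by $j=\nu-m$ gathers together the powers of $\log\log x$ and produces exactly the stated coefficients $e_j=\sum_{\nu=j}^{k}(\nu!/j!)g_\nu(1)I_{\nu-j}$. The sum terminates at $k-1$ rather than at $k$ because $e_k=g_k(1)I_0$ and $I_0=(2\pi i)^{-1}\int_{\mathcal{C}}e^z\,dz=0$: the two horizontal segments of $\mathcal{C}$ cancel and the unit circle contributes zero by Cauchy's theorem.

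The main obstacle is the error bookkeeping. The truncation level $T$ in the Perron step, the distance at which the Hankel loop around $s=1$ stands off from the branch point, and the order of the Taylor approximations $g_\nu(s)\approx g_\nu(1)$ and $1/s\approx 1$ on that loop must all be balanced so that the assumptions (i), (ii), (iii) combine to yield a single error of size $O(x/\log^{2-\eps}x)$ for arbitrary $\eps>0$. This balancing is precisely the content of Kaczorowski's Main Lemma in \cite{K}; granting that lemma, what remains is the substitution-and-binomial computation above, which makes the dependence on the model integrals $I_m$ explicit.
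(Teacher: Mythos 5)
Your proposal is correct and is consistent with what the paper does: the paper gives no independent proof, deferring entirely to Kaczorowski's Main Lemma (Case I, $q=0$ in \cite{K}), which is exactly the analytic input you invoke for the error bookkeeping. Your reconstruction of the remaining content --- the Perron/Hankel contour deformation, the substitution $s=1+z/\log x$, the binomial expansion yielding $e_j=\sum_{\nu=j}^{k}(\nu!/j!)\,g_\nu(1)\,I_{\nu-j}$, and the observation that $I_0=0$ truncates the sum at $j=k-1$ --- is accurate and matches the paper's Lemma~\ref{L3}.
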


The proof may be found in \cite{K} where we take Case I and $q=0$
in the Main Lemma.

\begin{lem}\label{L3} Let $t$ be any positive real number with $t<1$. Then

a) $I_0=0$,

b) $\sum_{m=1}^\infty t^{m-1}\,I_m=\exp\big(\gamma
t+\sum_{n=2}^\infty (-1)^{n-1}\zeta(n)\frac{t^n}{n}\big)$, where
$\gamma=0.577\ldots$ is Euler's constant,

c) $I_1=1$ and $I_2=\gamma$.

\end{lem}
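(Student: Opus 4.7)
The plan is to recognize that the generating function $\sum_{m \ge 1} t^{m-1} I_m$ collapses, via a Hankel-type integral representation, to $1/\Gamma(1+t)$, and then to extract (a) and (c) as byproducts of this computation. Part (a) is the simplest: the integrand of $I_0$ is just $e^z$, which is entire, so the two collinear segments contribute $\int_{-\infty}^{-1} e^x\,dx$ and $\int_{-1}^{-\infty} e^x\,dx$, which cancel, while the unit-circle piece integrates to zero by Cauchy's theorem. Equivalently, $I_0$ is the $s=0$ instance of Hankel's formula $\frac{1}{2\pi i}\int_\mathcal{C} e^z z^{-s}\,dz = 1/\Gamma(s)$, and $1/\Gamma(0)=0$.

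For part (b), I would interchange summation and integration, writing $\frac{(-1)^m t^{m-1}}{m!} = \frac{1}{t}\cdot\frac{(-t)^m}{m!}$, so that the series on $(\log z)^m$ resums to an exponential:
\[
\sum_{m=1}^\infty t^{m-1} I_m \;=\; \frac{1}{t}\cdot\frac{1}{2\pi i}\int_\mathcal{C} e^z\bigl(e^{-t\log z}-1\bigr)\,dz \;=\; \frac{1}{t}\cdot\frac{1}{2\pi i}\int_\mathcal{C} e^z\bigl(z^{-t}-1\bigr)\,dz.
\]
Splitting the integral, Hankel's formula identifies the first piece as $1/\Gamma(t)$, and part (a) kills the second piece, giving $1/(t\Gamma(t)) = 1/\Gamma(1+t)$. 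The claimed identity then follows from the classical Taylor expansion
\[
\log\Gamma(1+t) \;=\; -\gamma t + \sum_{n=2}^\infty (-1)^n\zeta(n)\,\frac{t^n}{n}, \qquad |t|<1,
\]
obtained by logarithmic differentiation of the Weierstrass product for $\Gamma$; negating and exponentiating produces precisely the right-hand side of (b).

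Part (c) is then immediate from reading off the first two Taylor coefficients of $1/\Gamma(1+t)$ at $t=0$: the constant term is $1$, giving $I_1=1$, and the coefficient of $t$ is $\gamma$, giving $I_2=\gamma$. The only real technical obstacle is justifying the interchange of sum and integral in part (b). This is routine on the Hankel contour: on the tails $(-\infty,-1]$ one has $|e^z|=e^{\operatorname{Re}(z)}$ decaying exponentially, while $|{-t\log z}|$ grows only logarithmically in $|z|$ (with $|\arg z|\le\pi$), so the absolute series is dominated by $e^{\operatorname{Re}(z)}\,|z|^{|t|}e^{\pi|t|}$, which is integrable for $0<t<1$; on the unit circle the series converges uniformly, so Fubini applies throughout.
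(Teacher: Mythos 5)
Your proof is correct and follows essentially the same route as the paper: resumming the series under the Hankel integral to obtain $1/(t\Gamma(t))=1/\Gamma(1+t)$, invoking the classical expansion of $\log\Gamma(1+t)$ (which the paper simply cites from Wrench), and reading off $I_1$ and $I_2$ from the Taylor coefficients. The only difference is that you supply the dominated-convergence justification for the interchange of sum and integral, which the paper glosses over by calling the sum ``formal.''
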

\begin{proof} Part a) follows since
$I_0=\int_\mathcal{C}e^z~dz=0.$

With respect to Part b), consider the formal sum
$$\sum_{m=0}^\infty t^m\,I_m=\frac{1}{2\pi
i}\int_\mathcal{C}e^ze^{-t\log z}\,dz=\frac1{2\pi
i}\int_\mathcal{C}e^ze^{-t}\,dz=\frac1{\Gamma(t)}.$$ But then
since $I_0=0$, we have $$\sum_{m=1}^\infty
t^{m-1}\,I_m=\frac1{t\Gamma(t)}=\exp\big(\gamma
t+\sum_{n=2}^\infty (-1)^{n-1}\zeta(n)\frac{t^n}{n}\big),$$ by
\cite{Wr}.

Part c) follows immediately from b).
\end{proof}

\begin{cor}\label{C5} Let $e_j$ be defined as in the proposition above. Then

i) if $k\geq 1$, $$e_{k-1}=k\,g_k(1),$$

ii) if $k\geq 2$, $$e_{k-2}= (k-1)g_{k-1}(1)
+k(k-1)g_k(1)\,\gamma.$$

\end{cor}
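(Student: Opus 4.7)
The plan is to simply substitute the values $I_0=0$, $I_1=1$, $I_2=\gamma$ from Lemma \ref{L3} into the defining formula
$$e_j=\sum_{\nu=j}^{k}\frac{\nu!}{j!}\,g_\nu(1)\,I_{\nu-j}$$
given in Proposition \ref{ML}, and observe that only the top one or two terms of the sum survive.

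For part (i), I would take $j=k-1$, so the sum runs over $\nu\in\{k-1,k\}$, giving
$$e_{k-1}=\frac{(k-1)!}{(k-1)!}g_{k-1}(1)\,I_0+\frac{k!}{(k-1)!}g_k(1)\,I_1.$$
Since $I_0=0$ kills the first term and $I_1=1$, the right-hand side collapses to $k\,g_k(1)$, as claimed.

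For part (ii), I would take $j=k-2$, so the sum runs over $\nu\in\{k-2,k-1,k\}$, giving three terms with coefficients $I_0$, $I_1$, $I_2$ respectively. Again $I_0=0$ eliminates the $\nu=k-2$ term, while $I_1=1$ and $I_2=\gamma$ reduce the remaining two terms to
$$\frac{(k-1)!}{(k-2)!}g_{k-1}(1)+\frac{k!}{(k-2)!}g_k(1)\,\gamma=(k-1)g_{k-1}(1)+k(k-1)g_k(1)\,\gamma,$$
which is the desired identity.

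There is no real obstacle here: the only content is the evaluation of $I_0,I_1,I_2$, which has already been carried out in Lemma \ref{L3}, so the corollary is a two-line specialization of the general formula for $e_j$. The one thing to be slightly careful about is that the sum defining $e_j$ is nonempty in each case (this requires $k\ge 1$ for (i) and $k\ge 2$ for (ii), exactly the hypotheses stated).
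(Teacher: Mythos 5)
Your substitution of $I_0=0$, $I_1=1$, $I_2=\gamma$ from Lemma~\ref{L3} into the formula for $e_j$ in Proposition~\ref{ML} is exactly the paper's (unwritten) argument --- the paper simply declares the corollary ``immediate from the preceding lemma and proposition.'' Your computation is correct and complete; nothing further is needed.
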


The proof is immediate from the preceding lemma and proposition.

We now apply these results to $\mu(s)$ to obtain information about
$M(x)$.  By \cite{K}, using results in \cite{L},  $\mu(s)$ belongs
to the class $\mathcal{A}$.

We shall state a well-known result about $\sum_{\fp\in
\fc}\frac1{N\fp^s},$ for $\fc\in \Cl$, but first we recall some
definitions.

Let $K$ be an algebraic number field of degree $n$ over $\Q$ with
class group $\Cl(K)=\Cl$ of order $h$. Let $\widehat{\Cl}$ denote
the character group of $\Cl$, i.e. the group of homomorphisms from
$\Cl$ into  the multiplicative group $\C^*$. As usual, we denote
the principal character, i.e. the constant character $1$, by
either $\chi_0$ or simply by $1$.

Let $\chi$ be an arbitrary character on $\Cl$, then we define the
$L$-series
$$L(s,\chi)=\sum_{\fa}\frac{\chi(\fa)}{N\fa^s}\;\;\;\;(\sigma
>1) ,$$ where the sum is over all (nonzero) integral ideals of $K$.

If $\chi=1$, the principal character, then
$$L(s,\chi_0)=\zeta_K(s),$$ the Dedekind zeta function of $K$.

As is well known, $L(s,\chi)$ converges absolutely and uniformally
on compact subsets in the half plane $\sigma>1$. Moreover, since
the norm map $N$ is completely multiplicative on the set of ideals
of $K$, we have
$$L(s,\chi)=\prod_{\fp}\left(1-\frac{\chi(\fp)}{N\fp^s}\right)^{-1},$$ for
all $\sigma>1$ and where the product is taken over all (nonzero)
prime ideals of $K$. It is also well known that in the half plane
$\sigma>1-1/n$, the series for $L(s,\chi)$ converges, if
$\chi\not=1$, and $L(s,\chi)$ is regular there. On the other hand,
$\zeta_K(s)$  has a continuation into the same half plane but with
a simple pole at $s=1$ with (nonzero) residue $a_K$.

Furthermore, in the region $\mathcal{R}_K$ given by
$$\sigma>1-\frac{c_K}{\log(|t|+2)}$$ $L(s,\chi)$ does not vanish,
where $c_K$ depends on $K$ but not on $\chi$.

Now, since $L(s,\chi)$ is nonzero in the region above, we see that
$\log L(s,\chi)$ is defined and regular in this region.

\begin{prop}\label{P5} Let $\fc$ be an ideal class of $\Cl$. Then
$$\sum_{\fp\in\fc}\frac1{N\fp^s}=\frac1{h}\log
\zeta_K(s)+\frac1{h}\sum_{\chi\atop
\chi\not=1}\overline{\chi}(\fc) \log
L(s,\chi)-\sum_{m=2}^\infty\sum_{\fp\atop \fp^m\in
\fc}\frac1{mN\fp^{ms}},$$ for $\sigma>1$.
\end{prop}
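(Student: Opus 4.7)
The plan is to start from the Euler product representation of each Hecke $L$-series, take a logarithm, and then apply orthogonality of characters on $\Cl$ to isolate the sum over prime ideals in a given class $\fc$.

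More concretely, for $\sigma>1$ the Euler product gives
\[
\log L(s,\chi)=\sum_{\fp}\sum_{m=1}^{\infty}\frac{\chi(\fp)^{m}}{m\,N\fp^{ms}}
=\sum_{\fp}\sum_{m=1}^{\infty}\frac{\chi(\fp^{m})}{m\,N\fp^{ms}},
\]
where absolute convergence is automatic in the half-plane $\sigma>1$ (this is the step where I use completely multiplicativity of $\chi$ on ideals, together with $|\chi|=1$). Multiply by $\overline{\chi}(\fc)$ and sum over $\chi\in\widehat{\Cl}$. By absolute convergence I may interchange the orders of summation, and orthogonality
\[
\frac{1}{h}\sum_{\chi\in\widehat{\Cl}}\overline{\chi}(\fc)\,\chi(\fa)=
\begin{cases}1,&[\fa]=\fc,\\ 0,&\text{otherwise,}\end{cases}
\]
then picks out precisely those pairs $(\fp,m)$ with $\fp^{m}\in\fc$. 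Thus
\[
\frac{1}{h}\sum_{\chi\in\widehat{\Cl}}\overline{\chi}(\fc)\log L(s,\chi)
=\sum_{m=1}^{\infty}\sum_{\fp:\,\fp^{m}\in\fc}\frac{1}{m\,N\fp^{ms}}.
\]

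Now I separate the principal character $\chi=1$, for which $\overline{\chi}(\fc)=1$ and $L(s,\chi_{0})=\zeta_{K}(s)$, and I split off the $m=1$ term on the right-hand side, which is exactly $\sum_{\fp\in\fc}N\fp^{-s}$. Rearranging yields
\[
\sum_{\fp\in\fc}\frac{1}{N\fp^{s}}
=\frac{1}{h}\log\zeta_{K}(s)
+\frac{1}{h}\sum_{\chi\neq 1}\overline{\chi}(\fc)\log L(s,\chi)
-\sum_{m=2}^{\infty}\sum_{\fp:\,\fp^{m}\in\fc}\frac{1}{m\,N\fp^{ms}},
\]
which is the claimed identity.

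The only real subtlety is justifying the manipulation of the logarithms and the interchanges of summation. The Euler product converges absolutely for $\sigma>1$ and is nonvanishing there, so the principal branch of $\log L(s,\chi)$ is unambiguously defined and equals the double series above; the interchange of summations over $\chi$, $\fp$ and $m$ is justified by absolute convergence of $\sum_{\fp,m}m^{-1}N\fp^{-m\sigma}$, which in turn follows from absolute convergence of $\log\zeta_{K}(s)$ for $\sigma>1$. So the genuinely routine part is just keeping track of the indexing, and there is no substantive obstacle beyond those convergence checks.
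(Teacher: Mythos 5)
Your proof is correct: the Euler-product expansion of $\log L(s,\chi)$, orthogonality of characters on $\Cl$, and the separation of the $m=1$ and principal-character terms, with all interchanges justified by absolute convergence for $\sigma>1$, is exactly the standard argument. The paper itself gives no proof, deferring to Lang's \emph{Algebraic Number Theory}, and your write-up is precisely that standard proof, so there is nothing to add.
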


For a proof see, for example \cite{Lg}, (or just about any text on
algebraic number theory).

Notice that this proposition allows us to analytically continue
$\sum_{\fp\in\fc}N\fp^{-s}$ onto the region $\mathcal{R}_K$.

\begin{cor}\label{C5.1} Let
$$g_\fc(s)=\sum_{\fp\in\fc}\frac1{N\fp^s}-\frac1{h}
\log\left(\frac1{s-1}\right).$$ Then $$g_\fc(s)=\frac1{h}\log
((s-1)\zeta_K(s))+\frac1{h}\sum_{\chi\atop
\chi\not=1}\overline{\chi}(\fc) \log
L(s,\chi)-\sum_{m=2}^\infty\sum_{\fp\atop \fp^m\in
\fc}\frac1{mN\fp^{ms}},$$ hence regular in $\mathcal{R}_K$. In
particular, $$g_\fc(1)=\frac1{h}\log a_K +\frac1{h}\sum_{\chi\atop
\chi\not=1}\overline{\chi}(\fc) \log
L(1,\chi)-\sum_{m=2}^\infty\sum_{\fp\atop \fp^m\in
\fc}\frac1{mN\fp^{m}},$$ where $a_K$ is the residue of
$\zeta_K(s)$ at $s=1$.
\end{cor}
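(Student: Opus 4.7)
The plan is to derive the formula for $g_\fc(s)$ as an essentially immediate rearrangement of Proposition \ref{P5}, and then to verify regularity summand by summand on the region $\mathcal{R}_K$, noting that evaluation at $s=1$ then poses no trouble.

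First I would start from the identity in Proposition \ref{P5},
$$\sum_{\fp\in\fc}\frac{1}{N\fp^s}=\frac{1}{h}\log\zeta_K(s)+\frac{1}{h}\sum_{\chi\neq 1}\overline{\chi}(\fc)\log L(s,\chi)-\sum_{m=2}^\infty\sum_{\fp^m\in\fc}\frac{1}{mN\fp^{ms}},$$
valid for $\sigma>1$. Subtracting $\frac{1}{h}\log\bigl(\frac{1}{s-1}\bigr)$ from both sides and pulling the two logarithmic terms together yields
$$g_\fc(s)=\frac{1}{h}\log\bigl((s-1)\zeta_K(s)\bigr)+\frac{1}{h}\sum_{\chi\neq 1}\overline{\chi}(\fc)\log L(s,\chi)-\sum_{m=2}^\infty\sum_{\fp^m\in\fc}\frac{1}{mN\fp^{ms}},$$
since $\log\zeta_K(s)-\log\bigl(\tfrac{1}{s-1}\bigr)=\log\bigl((s-1)\zeta_K(s)\bigr)$ whenever the branches are compatible, which is the case in a neighbourhood of the half-plane $\sigma>1$ where each side is real and positive.

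Next I would check regularity on $\mathcal{R}_K$ term by term. Since $\zeta_K(s)$ has a simple pole at $s=1$ with nonzero residue $a_K$ and extends regularly to $\mathcal{R}_K$ apart from that pole, $(s-1)\zeta_K(s)$ is regular and nonvanishing at $s=1$ (and throughout $\mathcal{R}_K$, as recalled in the preamble), so its logarithm is regular there. For $\chi\neq 1$, the paragraph before Proposition \ref{P5} recalls that $L(s,\chi)$ is regular and nonzero on $\mathcal{R}_K$, so $\log L(s,\chi)$ is regular there as well. Finally, the double sum $\sum_{m\geq 2}\sum_{\fp^m\in\fc}\frac{1}{m N\fp^{ms}}$ is dominated by $\sum_{m\geq 2}\sum_{\fp}\frac{1}{m N\fp^{m\sigma}}$, which converges absolutely and uniformly on compacta for $\sigma>1/2$; in particular it is regular on $\mathcal{R}_K\subseteq\{\sigma>1/2\}$ (taking $\mathcal{R}_K$ small enough, which we may). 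Hence $g_\fc(s)$ continues to a regular function on $\mathcal{R}_K$.

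The ``in particular'' claim is then obtained by setting $s=1$: the first term becomes $\frac{1}{h}\log a_K$ because $(s-1)\zeta_K(s)\to a_K$; the $L$-series terms give $\frac{1}{h}\sum_{\chi\neq 1}\overline{\chi}(\fc)\log L(1,\chi)$; and the double sum is evaluated at $s=1$ directly. The only mild obstacle is the technical point of matching branches of the logarithm so that the identity $\log\zeta_K(s)-\log\bigl(\tfrac{1}{s-1}\bigr)=\log\bigl((s-1)\zeta_K(s)\bigr)$ and the evaluation at $s=1$ are unambiguous; this is handled by choosing branches in a simply connected neighbourhood of $[1,\infty)$ inside $\mathcal{R}_K$, on which all relevant functions are nonvanishing.
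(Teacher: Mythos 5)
Your proposal is correct and follows the same route as the paper, which simply writes $\zeta_K(s)=\frac{1}{s-1}\,(s-1)\zeta_K(s)$ in Proposition \ref{P5} and applies $\log$. The additional term-by-term regularity check and the remark on branch choices merely fill in details the paper leaves implicit.
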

\begin{proof} Write $\zeta_K(s)$ as $\frac1{s-1}(s-1)\zeta_K(s)$
and then apply $\log$.
\end{proof}

We now apply this result to $M(x)$.

\begin{prop}\label{P4.5} Let $K$ be an algebraic number field with class number
$h$ and associated Davenport number $D$. Then
$$M(x)=Dc_Dh^{-D}\frac{x}{\log x}\left(\log\log x\right)^{D-1}+
\frac{x}{\log x}\sum_{j=0}^{D-2}e_j\left(\log\log x\right)^j $$
$$+O\left(\frac{x}{(\log x)^{3/2}}\right),$$ where the $e_j$ are
given in {\rm Proposition \ref{ML}} with $g_j(s)=h^{-j}c_j(s)$.
\end{prop}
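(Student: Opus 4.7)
The plan is to recognize the statement as a direct application of Proposition \ref{ML} to the Dirichlet series $\mu(s)$, once we rewrite the formula from Proposition \ref{P4} in the exact form required by the ``Main Lemma'' and verify membership of $\mu(s)$ in the class $\mathcal{A}$. Starting from Proposition \ref{P4}, since $\ell=\frac{1}{h}\log\bigl(\frac{1}{s-1}\bigr)$, I would rewrite
$$\mu(s)=\sum_{\mu=0}^{D}c_\mu(s)\,\ell^\mu=\sum_{\mu=0}^{D}\bigl(h^{-\mu}c_\mu(s)\bigr)\log^\mu\!\left(\frac{1}{s-1}\right),$$
so that in the language of Proposition \ref{ML} we have $k=D$ and $g_\mu(s)=h^{-\mu}c_\mu(s)$. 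The Dirichlet coefficients of $\mu(s)$ are non-negative (they count principal ideals generated by irreducibles with a given norm), so the class $\mathcal{A}$ framework applies after the structural verifications below.

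Next I would verify the three hypotheses defining $\mathcal{A}$. Hypothesis (ii) — the existence of analytic continuations of the $g_\mu(s)$ to the region $\mathcal{R}_K$ — is the heart of the matter: by Corollary \ref{C2}, each $c_\mu(s)$ is a finite polynomial combination of the quantities $g_i(s)=g_{\fc_i}(s)$ and $z_{ij}(s)=\sum_{\fp\in\PP\cap\fc_i}N\fp^{-js}$. Corollary \ref{C5.1} provides the analytic continuation and regularity of each $g_{\fc_i}(s)$ in $\mathcal{R}_K$, while for $j\ge 2$ the series $z_{ij}(s)$ already converges absolutely for $\sigma>1/2$ and is manifestly regular there. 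The logarithmic growth bound (iii) for each $g_\mu(s)$ in $\mathcal{R}_K$ then follows from the corresponding standard bounds for $\log\zeta_K(s)$ and $\log L(s,\chi)$. Hypothesis (i), the average-growth estimate for the coefficients of $\mu(s)$, is the one step I expect to be the main obstacle; here I would not try to reprove it but invoke the argument from \cite{K} (supplemented by the ideal-counting bounds of \cite{L}) exactly as the paper announces, since those references already establish that $\mu(s)\in\mathcal{A}$. Finally, Corollary \ref{C3}(i) gives $c_D(s)=\sum_{\underline{k}\in\D_D}\prod_i\tfrac{1}{k_i!}$, a constant in $s$, so $g_D(1)=h^{-D}c_D$ is nonzero as required.

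With $\mu(s)\in\mathcal{A}$ in hand, I would invoke Proposition \ref{ML} with $\epsilon=\tfrac{1}{2}$ to obtain
$$M(x)=\frac{x}{\log x}\sum_{j=0}^{D-1}e_j(\log\log x)^j+O\!\left(\frac{x}{(\log x)^{3/2}}\right),$$
where the coefficients $e_j$ are given by the formula in Proposition \ref{ML} with $g_\nu(s)=h^{-\nu}c_\nu(s)$. The leading coefficient is obtained from Corollary \ref{C5}(i):
$$e_{D-1}=D\,g_D(1)=D\cdot h^{-D}\cdot c_D=D c_D h^{-D},$$
which matches the leading term in the statement. The remaining coefficients $e_0,\dots,e_{D-2}$ are exactly those produced by Proposition \ref{ML} under the substitution $g_j(s)=h^{-j}c_j(s)$, completing the identification. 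The only nontrivial input not proved in the paper itself is the analytic and growth information needed to place $\mu(s)$ in $\mathcal{A}$, which is imported from \cite{K} and \cite{L}; everything else is algebraic bookkeeping of the formula in Proposition \ref{P4}.
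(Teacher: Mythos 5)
Your proposal is correct and follows the paper's own route exactly: the paper's proof is the one-line observation that $\mu(s)=\sum_{\mu=0}^D c_\mu(s)\left(\frac1{h}\log\frac1{s-1}\right)^\mu$, so that Proposition \ref{ML} applies with $g_\mu(s)=h^{-\mu}c_\mu(s)$ (membership in $\mathcal{A}$ being imported from \cite{K} and \cite{L}, just as you do), and Corollary \ref{C5}(i) gives $e_{D-1}=Dg_D(1)=Dc_Dh^{-D}$. Your write-up simply spells out the verifications the paper leaves implicit.
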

\begin{proof} The proof is immediate since
$$\mu(s)=\sum_{\mu=0}^Dc_\mu(s)\left(\frac1{h}
\log(\frac1{s-1})\right)^\mu.$$
\end{proof}

As an immediate corollary we have,

\begin{cor}\label{C6}
$$M(x)\sim Dc_Dh^{-D}\frac{x}{\log x}\left(\log\log
x\right)^{D-1}.$$
\end{cor}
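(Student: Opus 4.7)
The plan is to read off the asymptotic directly from Proposition \ref{P4.5}, which already supplies a full asymptotic expansion of $M(x)$. I only need to verify two things: that the stated leading coefficient $Dc_Dh^{-D}$ is nonzero (so that the displayed term really is the dominant contribution), and that every other term in Proposition \ref{P4.5} is of strictly smaller order than $\frac{x}{\log x}(\log\log x)^{D-1}$.

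For the nonvanishing, I would invoke part (i) of Corollary \ref{C3}, namely
$$c_D=\sum_{\underline{k}\in\D_D}\prod_{i=1}^h\frac1{k_i!}.$$
By the very definition of the Davenport constant $D$, the set $\D_D$ is nonempty, and each summand is a positive real number. Hence $c_D>0$, so $Dc_Dh^{-D}\neq 0$.

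For the asymptotic comparison, I would estimate the two remaining pieces of Proposition \ref{P4.5} against the main term. The polynomial correction $\frac{x}{\log x}\sum_{j=0}^{D-2}e_j(\log\log x)^j$ is at most $O\!\left(\frac{x}{\log x}(\log\log x)^{D-2}\right)$, whose ratio against the main term is $O(1/\log\log x)\to 0$. The error $O(x/(\log x)^{3/2})$, divided by the main term, is
$$O\!\left(\frac{1}{(\log x)^{1/2}(\log\log x)^{D-1}}\right),$$
which also tends to $0$ as $x\to\infty$. Combining these two estimates with the nonvanishing of $Dc_Dh^{-D}$ yields the asserted $M(x)\sim Dc_Dh^{-D}\frac{x}{\log x}(\log\log x)^{D-1}$.

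There is essentially no obstacle here; the only subtlety worth flagging is ensuring that $c_D$ is truly positive (not merely nonzero as a formal quantity), which is why I singled out the explicit positivity argument above. Everything else is a routine comparison of orders of magnitude using Proposition \ref{P4.5}.
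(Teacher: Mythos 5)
Your proposal is correct and matches the paper's approach: the corollary is read off directly from Proposition \ref{P4.5}, with the lower-order terms and the error term absorbed by the main term. Your added check that $c_D>0$ (via Corollary \ref{C3}(i) and the nonemptiness of $\D_D$ from the definition of the Davenport constant) is a worthwhile explicit justification that the paper leaves implicit.
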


Compare this with Theorem 1 of \cite{H-KM}.

But we also get the following result.

\begin{thm}\label{T1} For $D\geq 2$,
$$M(x)=\frac{x}{\log x}\left(C(\log\log x)^{D-1}+B(\log\log
x)^{D-2}\right)$$ $$+O\left(E(x)\right),$$ where

$$C=D\,c_D\,h^{-D}$$
$$B=(D-1)c_{D-1}(1)h^{1-D}+D(D-1)c_D h^{-D}\gamma,$$ with
$\gamma$, Euler's constant, and where
$$E(x)=\frac{x}{\log x}(\log\log x)^{D-3}$$ if $D\geq 3,$

and
$$\frac{x}{(\log x)^{3/2}}$$ if not.
\end{thm}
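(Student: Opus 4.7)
The plan is to read off Theorem \ref{T1} directly from Proposition \ref{P4.5} by identifying the top two coefficients of the inner polynomial in $\log\log x$ and absorbing the remaining lower-order terms into the error.

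First I would start from the expansion given by Proposition \ref{P4.5}, namely
$$M(x)=Dc_Dh^{-D}\frac{x}{\log x}(\log\log x)^{D-1}+\frac{x}{\log x}\sum_{j=0}^{D-2}e_j(\log\log x)^j+O\!\left(\frac{x}{(\log x)^{3/2}}\right),$$
and recognize that the leading term is already $C(\log\log x)^{D-1}\,x/\log x$ with $C=Dc_Dh^{-D}$. So the only real task is to extract $e_{D-2}$ and to dispose of $e_j$ for $j\le D-3$.

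Next I would apply Corollary \ref{C5}(ii) with $k=D$ to the $e_j$ supplied by Proposition \ref{ML}: this gives
$$e_{D-2}=(D-1)g_{D-1}(1)+D(D-1)g_D(1)\,\gamma.$$
Because Proposition \ref{P4.5} prescribes $g_j(s)=h^{-j}c_j(s)$, and because $c_D(s)=c_D$ is constant in $s$ by Corollary \ref{C3}(i), substituting yields
$$e_{D-2}=(D-1)h^{1-D}c_{D-1}(1)+D(D-1)h^{-D}c_D\,\gamma=B,$$
matching the definition of $B$ in the theorem statement. This identifies the coefficient of $(\log\log x)^{D-2}$ in the expansion as exactly $B$.

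Finally I would handle the remaining terms of the sum. If $D\ge 3$, then $\sum_{j=0}^{D-3}e_j(\log\log x)^j$ is a polynomial in $\log\log x$ of degree at most $D-3$ whose coefficients are constants depending only on $K$, so it is trivially $O((\log\log x)^{D-3})$; multiplying by $x/\log x$ gives the stated error $E(x)=x(\log\log x)^{D-3}/\log x$, which dominates $x/(\log x)^{3/2}$ for $x$ large. If $D=2$, then $D-2=0$ and the sum in Proposition \ref{P4.5} consists of the single term $e_0=B$, so there are no leftover lower-order $(\log\log x)^j$ contributions and the error reverts to the $O(x/(\log x)^{3/2})$ already furnished by Proposition \ref{P4.5}. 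No real obstacle arises: the work is entirely bookkeeping, provided one trusts the values of $I_1=1$, $I_2=\gamma$ computed in Lemma \ref{L3} that feed Corollary \ref{C5}(ii). The one place to be careful is simply keeping the powers of $h$ straight when converting $g_j(1)$ back into $c_j(1)$.
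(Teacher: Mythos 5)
Your proposal is correct and follows exactly the route the paper intends: Theorem \ref{T1} is read off from Proposition \ref{P4.5} together with Corollary \ref{C5}(ii) (with $k=D$ and $g_j(s)=h^{-j}c_j(s)$), and your bookkeeping of the powers of $h$, the constancy of $c_D$, and the two cases $D\geq 3$ versus $D=2$ for the error term all check out.
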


\section{The Special Case of Number Fields with Cyclic Class Group}

We now investigate the asymptotic behavior of $M(x)$ when the
number field $K$ has cyclic class group $\Cl$ of order $h>1$. Then
we see by Theorem \ref{T1} that in order to compute the
coefficients $C$ and $B$, we need to determine $c_D$ and
$c_{D-1}(s).$ First of all, notice that $D=h$, for we have already
observed that $D\leq h$ for any $\Cl$. But now since $\Cl$ is
cyclic generated by $\fc$, say, then $\fc^h\overset{min}{=}1$,
whence $h\leq D$ in this case.

Now by Corollary \ref{C3}, we need to determine $\mathcal{D}_m$
for $m=D=h$ and $m=D-1=h-1$.

To this end, we cite the following main result of \cite{Gao}.

\begin{prop}\label{ZS} Let $S=(a_1,\cdots,a_{n-k})$ be a sequence
of $n-k$ (not necessarily distinct) elements in $\Z_n=\Z/n\Z$.
Suppose $1\leq k\leq n/6+1$ and that $0$ cannot be expressed as a
sum over a nonempty subsequence of $S$; then there exist an
integer $c$ coprime to $n$ and a permutation $\sigma$ of the set
$\{1,2,\cdots,n-k\}$ such that $ca_{\sigma (i)}=1$ for
$i=1,\cdots,n-2k+1,$ and
$\sum_{i=n-2k+2}^{n-k}|a_{\sigma(i)}|_n\leq 2k-2,$ where $|x|_n$
denotes the least positive inverse image of $x$ under the natural
homomorphism from the additive group of integers onto $\Z_n.$

In particular, there are at least $n-2k+1$ terms in $S$ which are
relatively prime to $n$ and all congruent to one another modulo
$n$.
\end{prop}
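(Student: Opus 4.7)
The plan is to exploit the near-extremality of $|S|=n-k$: the Davenport constant of $\Z_n$ is $n$, so a zero-sum-free sequence of length this close to $n$ is forced to be very rigid. Let $\Sigma(S)$ denote the set of nonempty subset sums of $S$. Zero-sum-freeness gives $0\notin\Sigma(S)$, hence $|\Sigma(S)|\le n-1$, while the classical Olson/Eggleton--Erd\H{o}s lower bound gives $|\Sigma(S)|\ge|S|=n-k$. So $\Sigma(S)$ misses at most $k-1$ nonzero residues, which is the rigidity we will leverage.

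First I would show that some $a_i$ is a unit in $\Z_n$. If instead every $a_i$ lay in a proper subgroup $H<\Z_n$, then $\Sigma(S)\subseteq H$, so $n-k\le|\Sigma(S)|\le|H|-1\le n/2-1$, contradicting $k\le n/6+1$. Multiplying through by the inverse of such a unit, I may assume $1$ appears in the sequence. Next I would argue that in fact most entries already equal $1$. Writing $T$ for the subsequence of entries $\neq 1$, of length $r$, each exceptional value $b_j\in T$ contributes new subset sums that stretch $\Sigma(S)$ beyond the arithmetic progression generated by the block of $1$'s; a Freiman/Vosper-type structure theorem in $\Z_n$ then forces $r\le k-1$, so at least $n-2k+1$ entries of $S$ coincide with a common unit. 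Finally, zero-sum-freeness applied to subsets consisting of all of $T$ together with $m'$ copies of $1$ (for $0\le m'\le n-k-r$) yields $\sum_{j}|b_j|_n + m'\not\equiv 0\pmod{n}$; taking $m'$ as large as possible gives $\sum_j|b_j|_n\le n-1-(n-k-r)=k-1+r\le 2k-2$, which is the desired tail estimate. The ``in particular'' clause is then just the first half of this conclusion read off directly.

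The main obstacle is the middle step: upgrading the soft statement ``most of $S$ lives in a single unit coset'' to the sharp count $r\le k-1$. This is where the hypothesis $k\le n/6+1$ genuinely enters, since the restricted-sumset structure theorem for $\Sigma(S)$ in $\Z_n$ is only clean in this range; outside it, exceptional $b_j$'s with small positive representatives can form short arithmetic progressions that fuse with the block of $1$'s and spoil the dichotomy. I would expect the detailed argument to proceed by induction on $k$, with the base $k=1$ essentially being the Bovey--Erd\H{o}s--Niven theorem that a zero-sum-free sequence of length $\ge n-1$ in $\Z_n$ is constant equal to a generator, and each induction step peeling off one exceptional term while carefully tracking the growth of $\Sigma(S)$.
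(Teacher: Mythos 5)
This proposition is not proved in the paper at all: it is quoted verbatim as the main result of \cite{Gao}, so there is no internal argument to compare yours against, and your sketch has to stand on its own. As written it has genuine gaps. First, your argument that some $a_i$ is a unit assumes that if no $a_i$ generates $\Z_n$ then all of them lie in a \emph{common} proper subgroup $H$; that is not the negation of the claim. Different non-units can lie in different maximal subgroups (some $a_i$ in $p\Z_n$, others in $q\Z_n$), in which case $\Sigma(S)$ need not be contained in any proper subgroup and the counting contradiction evaporates. One can salvage ``$S$ generates $\Z_n$'' this way, but ``some single term is a unit'' needs a different argument. Second, and more seriously, essentially the entire content of the theorem is concentrated in the step you defer to an unnamed ``Freiman/Vosper-type structure theorem'' forcing $r\le k-1$ exceptional terms; you acknowledge yourself that this is the main obstacle and only say you ``would expect'' an induction on $k$ to work. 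No off-the-shelf inverse theorem applies directly to the restricted sumset $\Sigma(S)$ in this range; this is precisely the part to which Gao's paper is devoted, and leaving it as a black box means the proof is not there.

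Third, the tail estimate is not correct as stated. Zero-sum-freeness of $T$ together with $m'$ copies of $1$ for $0\le m'\le n-k-r$ only tells you that $\bigl(\sum_j|b_j|_n\bigr)\bmod n$ avoids $0$ and the residues $k+r,\dots,n-1$; it constrains the residue, not the integer $\sum_j|b_j|_n$, which a priori can exceed $n$. To conclude $\sum_j|b_j|_n\le k+r-1$ you must impose the same congruence constraint on every partial subset sum of $T$ and verify (using $r\le k-1$ and $k\le n/6+1$) that these partial sums never wrap around modulo $n$. That repair is routine, but it is an additional induction you have not carried out, and it again presupposes the unproved bound $r\le k-1$.
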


We use this result to prove the following lemma.

\begin{lem}\label{L4}  Suppose $\Cl=\la \fc\ra$. Then

$$\mathcal{D}_D=\{\kappa_k : 1\leq k\leq h, (k,h)=1\}, $$ where
$\kappa_k : \Cl\longrightarrow\N_0$ with $\kappa_k(\fc^k)=h$ and
$\kappa_k(\fc^\ell)=0$ otherwise;

$$\mathcal{D}_{D-1}=\{\lambda_k : 1\leq k\leq h, (k,h)=1\}, $$ where
$\lambda_k : \Cl\longrightarrow\N_0$ with $\lambda_k(\fc^k)=h-2,$
$\lambda_k(\fc^{2k})=1$, and  $\lambda_k(\fc^\ell)=0$ otherwise.

\end{lem}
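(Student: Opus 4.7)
The plan is to translate the problem into additive language by identifying $\Cl$ with $\Z/h\Z$ via the generator $\fc\leftrightarrow 1$. An element of $\mathcal{D}_m$ then becomes a multiset of $m$ elements of $\Z/h\Z$ whose sum vanishes modulo $h$ and which admits no proper nonempty zero-sum sub-multiset---that is, a minimal zero-sum sequence in the usual sense. The multiset $\kappa_1$ of $h$ copies of $1$ is clearly minimal, so $D\geq h$; combined with the bound $D\leq |\Cl|$ recalled in \S 2 this gives $D=h$, so the lemma concerns minimal zero-sum sequences of lengths $h$ and $h-1$ in $\Z/h\Z$.

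For $\mathcal{D}_D$, let $S$ be any minimal zero-sum sequence of length $h$. Remove one element; the remaining $h-1$ elements form a zero-sum-free sequence, since any zero sub-sum would be a proper nonempty zero sub-sum of $S$. Apply Proposition \ref{ZS} with $n=h$ and $k=1$ (the hypothesis $k\leq n/6+1$ is trivial): because $n-2k+1=h-1$ exhausts the retained sequence, there exists $c$ coprime to $h$ such that all $h-1$ retained elements equal $c^{-1}$. The zero-sum condition on $S$ then forces the removed element to equal $c^{-1}$ as well, giving $S=\kappa_{c^{-1}}$. Conversely, $\kappa_k$ with $\gcd(k,h)=1$ is minimal because any proper nonempty subsum equals $jk$ with $0<j<h$, which is nonzero modulo $h$ by the coprimality.

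The same strategy handles $\mathcal{D}_{D-1}$ with $k=2$. Given a minimal zero-sum $S$ of length $h-1$, remove one element to obtain a zero-sum-free sequence of length $h-2$; Proposition \ref{ZS} then supplies $c$ coprime to $h$ such that $h-3$ of the retained elements equal $c^{-1}$, while the last element $b$ satisfies $|cb|_h\leq 2$ and hence lies in $\{c^{-1},2c^{-1}\}$. In each sub-case the zero-sum condition on $S$ recovers the removed element, and a short calculation shows that $S$ consists of $h-2$ copies of $c^{-1}$ and one copy of $2c^{-1}$, so $S=\lambda_{c^{-1}}$. The reverse inclusion is a direct check: $\lambda_k$ has total sum $hk\equiv 0$, and every proper nonempty subsum equals $(j+2\epsilon)k$ for some $0\leq j\leq h-2$ and $\epsilon\in\{0,1\}$ with $j+2\epsilon\in\{1,\ldots,h-1\}$, which is nonzero modulo $h$ by $\gcd(k,h)=1$.

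The main obstacle is that Proposition \ref{ZS} requires $k\leq n/6+1$, which for $k=2$ forces $h\geq 6$; the four small cases $h\in\{2,3,4,5\}$ must therefore be dispatched by a direct enumeration of minimal zero-sum multisets in $\Z/h\Z$ of length $h-1$, which in each case yields precisely the announced list of $\lambda_k$.
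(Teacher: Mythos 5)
Your proof is correct and follows essentially the same route as the paper's: both reduce the problem to minimal zero-sum sequences in $\Z/h\Z$, delete one element to obtain a zero-sum-free sequence, invoke Proposition \ref{ZS} with $k=1$ (for $\mathcal{D}_D$) and $k=2$ (for $\mathcal{D}_{D-1}$), and dispose of $h<6$ by direct enumeration. Your version is in fact slightly more careful than the paper's, since you explicitly use the bound $|cb|_h\leq 2$ to pin down the one remaining element and then recover the deleted element from the zero-sum condition, and you verify the reverse inclusions.
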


\begin{proof}  We start by determining the elements of
$\mathcal{D}_D$. Suppose $\fc_1,\cdots,\fc_h\in\Cl$ and
$\prod_{i=1}^h\fc_i\overset{min}{=}1.$ Then the $h$ sequences
$S_j=(\fc_1,\cdots,\hat{\fc_j},\cdots,\fc_h)$ (where $\fc_j$ is
omitted) satisfy the hypotheses of Proposition \ref{ZS} with
$k=1$. Hence in each $S_j$ there are at least $h-1$ terms which
are equal and generating $\Cl$.  Hence, we must have $\fc_1=
\cdots = \fc_h= \fc$ and $\la \fc\ra=\Cl.$ Hence $\mathcal{D}_D$
is as stated above.

Now consider $\mathcal{D}_{D-1}$. Suppose
$\fc_1,\cdots,\fc_{h-1}\in\Cl$ and
$\prod_{i=1}^{h-1}\fc_i\overset{min}{=}1.$ Then the $h-1$
sequences $S_j=(\fc_1,\cdots,\hat{\fc_j},\cdots,\fc_{h-1})$
satisfy the hypotheses above with $k=2$ provided $h\geq 6$. (For
$h<6$ the lemma follows by a straightforward calculation.)  Hence,
assume $h\geq 6$ in which case in each $S_j$ there are at least
$h-3$ terms which are equal and generate $\Cl$. But then, without
loss of generality, $\fc_1=\cdots =\fc_{h-2}=\fc$ where $\la
\fc\ra=\Cl.$ Thus $\fc^{h-2}\fd=1$ for some $\fd\in\Cl$; whence
$\fd=\fc^2$, as desired.

\end{proof}

This lemma along with Corollary \ref{C3} and Theorem \ref{T1}
yields the following proposition.

\begin{prop}\label{P6} Let $K$ be an algebraic number field with
cyclic class group $\Cl=\la \fc\ra$ of order $h>1$. Then
$$M(x)=\frac{x}{\log x}\left(C(\log\log x)^{h-1}+B(\log\log
x)^{h-2}\right)+O\left(E(x)\right),$$ where

$$C=\frac{\varphi(h)}{(h-1)!h^h},$$ and
$$B=\frac{\varphi(h)}{(h-2)!h^h}\gamma+\frac{h-1}{h^{h-1}}
\left(\frac{\varphi(h)}{(h-2)!}a(h)+\frac1{(h-1)!}\sum_{k=1 \atop
(k,h)=1}^h g_{\fc^k}(1)\right),$$ where $a(h)=1/2$, if $h=3$, and
$a(h)=1$, otherwise;  and where $g_{\fc}$ is as appears in
\rm{Corollary \ref{C5.1}}.
\end{prop}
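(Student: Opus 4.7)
The plan is to combine the identification $D=h$ with the explicit description of $\D_D$ and $\D_{D-1}$ given by Lemma \ref{L4}, the formulas for $c_D$ and $c_{D-1}$ in Corollary \ref{C3}(i)--(ii), and finally the asymptotic estimate of Theorem \ref{T1}; the error term $E(x)$ is inherited directly from Theorem \ref{T1} via the case split on whether $D=h\geq 3$ or $h=2$.

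First I would verify $D=h$. The inequality $D\leq |\Cl|=h$ is general, while a generator $\fc$ of the cyclic class group produces $\fc^h\overset{min}{=}1$, so $D\geq h$. Next, compute $c_D$ via Corollary \ref{C3}(i) and Lemma \ref{L4}: every $\kappa_k\in\D_D$ (indexed by $k$ coprime to $h$) has exactly one nonzero coordinate, equal to $h$, so $\prod_i 1/\kappa_k(\fc^i)! = 1/h!$; the maps $\kappa_k$ are pairwise distinct, giving $c_D=\varphi(h)/h!$. Substituting into $C=D\,c_D\,h^{-D}$ yields $C=\varphi(h)/((h-1)!\,h^h)$ directly.

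Now I would compute $c_{D-1}(1)$ from Corollary \ref{C3}(ii), which has two pieces. The second piece, $\sum_{\underline{k}\in\D_D}\prod_i 1/k_i! \sum_j k_j g_j$, simplifies immediately using the $\kappa_k$: for each such $\underline{k}$, $\sum_j k_j g_j = h\cdot g_{\fc^k}(1)$, so the total equals $\frac{1}{(h-1)!}\sum_{(k,h)=1} g_{\fc^k}(1)$. The first piece, $\sum_{\underline{k}\in\D_{D-1}}\prod_i 1/k_i!$, is evaluated using the $\lambda_k$: each has two nonzero entries, namely $h-2$ at $\fc^k$ and $1$ at $\fc^{2k}$, contributing $1/(h-2)!$ apiece. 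The main subtlety---and essentially the only obstacle in the proof---is that one must count the \emph{distinct} elements of $\D_{D-1}$: for $h\neq 3$ the $\lambda_k$ are pairwise distinct, giving a total of $\varphi(h)/(h-2)!$, whereas when $h=3$ one checks $\lambda_1=\lambda_2$ (both place a $1$ at $\fc$ and at $\fc^2$), so the sum is halved. This coincidence is precisely what the factor $a(h)$ in the statement absorbs.

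Finally I would substitute $c_D=\varphi(h)/h!$ and $c_{D-1}(1)=\frac{\varphi(h)}{(h-2)!}a(h)+\frac{1}{(h-1)!}\sum_{(k,h)=1}g_{\fc^k}(1)$ into the formula $B=(D-1)c_{D-1}(1)h^{1-D}+D(D-1)c_D h^{-D}\gamma$ from Theorem \ref{T1}; routine algebra with $D=h$ produces the stated expression for $B$. Everything apart from the $h=3$ coincidence is pure bookkeeping with factorials and $\varphi(h)$, so the whole argument reduces to a careful---but elementary---tally of the minimal zero-sum sequences in a cyclic group of length $h$ and $h-1$.
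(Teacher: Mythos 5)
Your proposal is correct and follows essentially the same route as the paper, which simply asserts that the result ``follows immediately from Corollary \ref{C3} and Theorem \ref{T1}'' (together with Lemma \ref{L4}) and flags the same $h=3$ anomaly, namely $|\mathcal{D}_2|=1$ rather than $\varphi(3)$. Your write-up in fact supplies the bookkeeping the paper omits, and the identification of $\lambda_1=\lambda_2$ when $h=3$ as the source of the factor $a(h)$ is exactly the point the paper's parenthetical remark is making.
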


The proof follows immediately from Corollary \ref{C3} and Theorem
\ref{T1}. (Notice that when $h=3$, $|\mathcal{D}_2|=1$, not
$\varphi(h)$.)

 We now give an (partially) arithmetic
interpretation of $$\sum_{k=1 \atop (k,h)=1}^h g_{\fc^k}(1).$$
First, we introduce some notation.

Once again assume $K$ has cyclic class group $\Cl=\la \fc\ra$ and
let $L$ be the Hilbert class field of $K$. For each divisor $d$ of
$h$ let $L_d$ denote the intermediate field in the extension $L/K$
of degree $d$ over $K$. (Since by class field theory
$\Gal(L/K)\simeq \Cl$ and $\Cl$ is cyclic, $L_d$ is uniquely
determined.) Notice in particular that $L_1=K$ and $L_h=L$.
Finally, let $a_{L_d}$ be the residue of the Dedekind zeta
function $\zeta_{L_d}(s)$ at $s=1$.

\begin{thm}\label{T2} Given the assumptions of the previous
paragraph,  $$\sum_{k=1 \atop (k,h)=1}^h
g_{\fc^k}(1)=\sum_{d|h}\frac{\mu(d)}{d}\log a_{L_d}-\underset{\la
[\fp^m]\ra=\Cl}{\sum_{m\geq 2}\sum_{\fp}}\frac1{mN\fp^m}.$$
\end{thm}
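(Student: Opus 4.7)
The plan is to compute the left-hand side directly from Corollary \ref{C5.1} and then show that the character sums that appear reorganize into the Möbius sum of residues on the right.

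First I would substitute Corollary \ref{C5.1} for each $g_{\fc^k}(1)$ with $(k,h)=1$. Summing the three pieces separately, the ``$\log a_K$'' pieces contribute $(\varphi(h)/h)\log a_K$; the double sum over prime powers contributes $-\sum_{m\geq 2}\sum_{\fp}(mN\fp^m)^{-1}$ restricted to those $\fp$ with $[\fp^m]=\fc^k$ for some $k$ coprime to $h$, which is exactly the condition $\la[\fp^m]\ra=\Cl$. This recovers the last term of the claimed identity. So the whole identity reduces to proving
\[
\frac{\varphi(h)}{h}\log a_K+\frac{1}{h}\sum_{\chi\neq 1}\Bigl(\sum_{(k,h)=1}\overline{\chi}(\fc^k)\Bigr)\log L(1,\chi)=\sum_{d\mid h}\frac{\mu(d)}{d}\log a_{L_d}.
\]

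Next I would expand the right-hand side using class field theory. Since $\Gal(L/K)\cong\Cl=\la\fc\ra$ is cyclic of order $h$, the unique subfield $L_d$ of degree $d$ over $K$ corresponds to the subgroup $\la\fc^d\ra$, and the characters of $\Cl$ that inflate to characters of $\Gal(L_d/K)$ are precisely those of order dividing $d$. Hence $\zeta_{L_d}(s)=\prod_{\operatorname{ord}(\chi)\mid d}L(s,\chi)$, and taking residues at $s=1$ (using that $\zeta_K$ contributes the pole while the nontrivial $L(s,\chi)$ are regular with nonzero value) gives
\[
\log a_{L_d}=\log a_K+\sum_{\chi\neq 1,\;\operatorname{ord}(\chi)\mid d}\log L(1,\chi).
\]
Summing against $\mu(d)/d$ and using $\sum_{d\mid h}\mu(d)/d=\varphi(h)/h$ handles the $\log a_K$ term; swapping the order of summation in the remainder leaves the coefficient of $\log L(1,\chi)$ equal to $\sum_{d:\,\operatorname{ord}(\chi)\mid d\mid h}\mu(d)/d$.

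The crux is therefore to show, for each nontrivial $\chi$ of order $e$ (necessarily $e\mid h$), that
\[
\frac{1}{h}\sum_{(k,h)=1}\overline{\chi}(\fc^k)=\sum_{\substack{d\mid h\\ e\mid d}}\frac{\mu(d)}{d}.
\]
For the left side, $\overline\chi(\fc)$ is a primitive $e$-th root of unity, so writing $\overline\chi(\fc)=e^{2\pi i k_0/h}$ with $\gcd(k_0,h)=h/e$ identifies the sum as a Ramanujan sum $c_h(k_0)$, and the standard evaluation gives $\mu(e)\varphi(h)/\varphi(e)$. For the right side, parametrize $d=ef$ with $f\mid h/e$; the term vanishes unless $e$ is squarefree (matching $\mu(e)=0$ in the Ramanujan formula), and in the squarefree case it collapses to $(\mu(e)/e)\prod_{p\mid(h/e),\,p\nmid e}(1-1/p)$. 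A short check using $\varphi(h)/\varphi(e)=(h/e)\prod_{p\mid h,\,p\nmid e}(1-1/p)$ (valid because $e$ is squarefree and $e\mid h$) shows the two expressions agree after division by $h$.

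The main obstacle is the Ramanujan-sum/Möbius bookkeeping in this final step; everything else is linear algebra on Dirichlet-series coefficients together with the standard factorization of $\zeta_{L_d}$. Once the coefficient identity is verified for each $\chi$, the theorem follows by linearity.
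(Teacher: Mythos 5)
Your proof is correct, and it rests on the same three ingredients as the paper's: Corollary \ref{C5.1}, the evaluation of $\sum_{(k,h)=1}\overline{\chi}(\fc^k)$ as a Ramanujan sum, and the class-field-theoretic factorization $\zeta_{L_d}(s)=\prod_{\operatorname{ord}(\chi)\mid d}L(s,\chi)$. The organization, however, runs in the opposite direction. The paper transforms the left-hand side: it writes $\beta(s)=\sum_j c_h(-j)\log L(s,\chi_j)$, groups characters by exact order, applies M\"obius inversion to convert each group into a combination of $\log\zeta_{L_d}(s)$, invokes the auxiliary identity $\sum_{\nu\mid h,\ d\mid\nu}\mu^2(\nu)/\varphi(\nu)=\tfrac{h}{\varphi(h)}\tfrac{\mu^2(d)}{d}$ quoted from an earlier paper, and only at the end lets $s\to 1^+$, checking that the $\log(s-1)$ contributions cancel because $\sum_{d\mid h}\mu(d)/d=\varphi(h)/h$. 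You instead expand the right-hand side in terms of $\log L(1,\chi)$ and match coefficients character by character, reducing everything to the single identity $\tfrac{1}{h}\,\mu(e)\varphi(h)/\varphi(e)=\sum_{e\mid d\mid h}\mu(d)/d$ for $e$ the order of $\chi$; your Euler-product verification is sound (the key observation being that $p\mid h$ and $p\nmid e$ force $p\mid h/e$ since $e\mid h$), and it is essentially an elementary substitute for the paper's quoted lemma. What your route buys is a cleaner handling of the pole at $s=1$ --- by passing immediately to residues via $a_{L_d}=a_K\prod_{\chi\neq 1,\ \operatorname{ord}(\chi)\mid d}L(1,\chi)$ you never need to track the cancelling logarithmic singularities --- at the mild cost of needing to know the target formula in advance rather than deriving it.
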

\begin{proof} By Corollary \ref{C5.1} we have
 $$\sum_{k=1 \atop (k,h)=1}^h g_{\fc^k}(s)=\frac{\varphi(h)}{h}\log
((s-1)\zeta_K(s))+\frac1{h}\beta(s)-\sum_{m=2}^\infty \sum_{k=1
\atop (k,h)=1}^h\sum_{\fp\atop \fp^m\in
\fc^k}\frac1{mN\fp^{ms}},$$ where $$\beta(s)=\sum_{\chi\atop
\chi\not=1}\sum_{k=1 \atop (k,h)=1}^h\overline{\chi}(\fc^k) \log
L(s,\chi).$$

For $j=0,\cdots,h-1$ let $\chi_j$ be the character on $\Cl$
determined by $\chi_j(\fc)=\zeta_h^j$ for $\zeta_h$ a primitive
$h$th root of unity. More generally, let $\chi_{d,j}$ be the
character on $\Cl$ determined by $\chi_{d,j}(\fc)=\zeta_d^j,$ for
any positive integer $d$ dividing $h$. Also let $$c_n(j)=\sum_{k=1
\atop (k,h)=1}^h\zeta_h^{jk},$$ the usual Ramanujan sum. Then
$$\beta(s)=\sum_{j=1}^{h-1}c_h(-j)\log L(s,\chi_j).$$ But the
Ramanujan sum has the explicit representation (see, e.g.
\cite{GR}, p. 238)
$$c_n(j)=\varphi(h)\frac{\mu(h/(h,j))}{\varphi(h/(h,j))}, $$ and
thus $$\beta(s)=\varphi(h)\sum_{\nu|h}
\frac{\mu(\nu)}{\varphi(\nu)}\sum_{j=1\atop (h,j)=h/\nu}^{h-1}\log
L(s,\chi_j)$$ $$=\varphi(h)\sum_{\nu|h}
\frac{\mu(\nu)}{\varphi(\nu)}\sum_{j=1\atop (h,j)=h/\nu}^{h}\log
L(s,\chi_j)-\varphi(h)\log \zeta_K(s).$$ Now, by \cite{Lg}, p.
230, we have $$\log \zeta_{L_d}(s)=\sum_{\nu|d}\sum_{j\bmod v\atop
(j,\nu)=1}\log L(s,\chi_{\nu,j}).$$ But then by M\"obius
inversion, $$\sum_{j\bmod h\atop (h,j)=h/\nu}\log
L(s,\chi_j)=\sum_{j\bmod v\atop (j,\nu)=1}\log
L(s,\chi_{\nu,j})=\sum_{d|\nu}\mu(\nu/d) \log \zeta_{L_d}(s).$$
Thus $$\varphi(h)\sum_{\nu|h}
\frac{\mu(\nu)}{\varphi(\nu)}\sum_{j=1\atop (h,j)=h/\nu}^{h}\log
L(s,\chi_j)=\varphi(h)\sum_{\nu|h}\sum_{d|\nu}\frac{\mu(\nu)}{\varphi(\nu)}\mu(\frac{\nu}{d})\log
\zeta_{L_d}(s)$$
$$=\varphi(h)\sum_{d|h}\log\zeta_{L_d}(s)\sum_{\nu|h\atop
d|\nu}\frac{\mu(\nu)\mu(\nu/d)}{\varphi(\nu)}=
\varphi(h)\sum_{d|h}\log\zeta_{L_d}(s)\mu(d)\sum_{\nu|h\atop
d|\nu}\frac{\mu^2(\nu)}{\varphi(\nu)}$$
$$=\varphi(h)\sum_{d|h}\log\zeta_{L_d}(s)\frac{h}{\varphi(h)}\frac{\mu(d)}{d}=
h\sum_{d|h}\frac{\mu(d)}{d}\log\zeta_{L_d}(s),$$ since
$$\sum_{\nu|h\atop
d|\nu}\frac{\mu^2(\nu)}{\varphi(\nu)}=\frac{h}{\varphi(\nu)}\frac{\mu^2(d)}
{d},$$ see, for example \cite{BOS}, Lemma 3.  Hence
$$\beta(s)=h\sum_{d|h}\frac{\mu(d)}{d}\log\zeta_{L_d}(s)-\varphi(h)\log\zeta_K(s).$$

Now notice that $$\lim_{\sigma\rightarrow
1^+}\beta(s)=~~~~~~~~~~~~~~~~~~~~~~~~~~~~~~~~~~~~~~~~~~~~$$ $$
h\sum_{d|h}\frac{\mu(d)}{d}\log (s-1)\zeta_{L_d}(s)-\varphi(h)\log
(s-1)\zeta_K(s)-\left(h\sum_{d|h}\frac{\mu(d)}{d}-\varphi(h)\right)\log
(s-1)=$$ $$h\sum_{d|h}\frac{\mu(d)}{d}\log a_{L_d} -
\varphi(h)\log a_K,$$ since $\varphi(h)=h\sum_{d|h}\mu(d)/d.$

This gives us the result.
\end{proof}

\section{Examples}

The coefficient $C$ of $M(x)$ depends on the class group of $K$,
more precisely, on the Davenport constant and the order of the
class group. On the other hand, the coefficient $B$ seems to
depend more intrinsically on the arithmetic for the field $K$. In
this section we consider approximating $B$ for two imaginary
quadratic number fields of class number 2, namely,
$K_1=\Q(\sqrt{-5}\,)$ and $K_2=\Q(\sqrt{-15}\,)$  to see if the
$B$ are unequal. But before we carry out the calculations in these
special cases, we consider Proposition \ref{P6} for the case where
$h=2$.

\begin{cor}\label{C8} Let $K$ be a number field with class number $2$. Denote
by $\fc$ the nonprincipal ideal class of  $\Cl$. Finally, let $L$
be the Hilbert class field of $K$. Then
$$M(x)=\frac1{4}\frac{x}{\log x}\log\log x
+\frac1{4}(2(1+g_\fc(1))+\gamma)\frac{x}{\log
x}+O\left(\frac{x}{(\log x)^{3/2}}\right),$$ where $\gamma$ is
Euler's constant and
$$g_\fc(1)=\log a_K-\frac1{2}\log a_L-\sum_{m\geq
3\atop m \equiv 1 (2)}\sum_{\fp\in\fc}\frac1{mN\fp^m}.$$
\end{cor}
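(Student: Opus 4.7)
The plan is to derive this corollary by specializing Proposition \ref{P6} and Theorem \ref{T2} to $h=2$; no new ideas are needed beyond careful substitution. First I would set $h=2$ in Proposition \ref{P6}. Since $\varphi(2)=1$, $(h-1)!=1$, and $h^h=4$, the leading coefficient reduces to $C=\frac1{4}$. For $B$, note that $h\neq 3$ so $a(h)=1$, that $(h-2)!=1$, and that $(h-1)/h^{h-1}=\frac1{2}$. The only integer $k$ with $1\leq k\leq 2$ and $(k,2)=1$ is $k=1$, so the inner sum $\sum_{k} g_{\fc^k}(1)$ collapses to $g_\fc(1)$. Thus
$$B=\frac{\gamma}{4}+\frac{1}{2}\bigl(1+g_\fc(1)\bigr)=\frac{1}{4}\bigl(2(1+g_\fc(1))+\gamma\bigr),$$
as claimed. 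The error term $O(x/(\log x)^{3/2})$ is exactly the case $D=2$ of $E(x)$ in Theorem \ref{T1}.

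Next I would evaluate $g_\fc(1)$ via Theorem \ref{T2}. The LHS again reduces to $g_\fc(1)$. The divisors of $h=2$ are $1$ and $2$, with intermediate fields $L_1=K$ and $L_2=L$, so
$$\sum_{d\mid 2}\frac{\mu(d)}{d}\log a_{L_d}=\log a_K-\tfrac{1}{2}\log a_L.$$

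Finally, I would unwind the condition $\la[\fp^m]\ra=\Cl$ appearing in the prime-power sum of Theorem \ref{T2}. In a group of order $2$, $\la[\fp^m]\ra=\Cl$ is equivalent to $[\fp^m]=\fc$. If $\fp$ is principal this never holds; if $\fp\in\fc$, then $[\fp^m]=\fc^m=\fc$ precisely when $m$ is odd. Combined with the constraint $m\geq 2$, we obtain $m\geq 3$ odd and $\fp\in\fc$, yielding the stated expression for $g_\fc(1)$. There is no serious obstacle here: every ingredient has been established, and the proof amounts to plugging in $h=2$ and observing the two specializations (only $k=1$ is a unit mod $2$; only odd $m\geq 3$ contribute to the ramification-type sum).
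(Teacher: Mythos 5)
Your proposal is correct and follows exactly the route the paper intends: Corollary \ref{C8} is stated as a direct specialization of Proposition \ref{P6} (giving $C=\tfrac14$ and $B=\tfrac14(2(1+g_\fc(1))+\gamma)$ with the $D=2$ error term from Theorem \ref{T1}) together with Theorem \ref{T2} for $h=2$, whose prime-power sum you correctly reduce to odd $m\geq 3$ and $\fp\in\fc$. All substitutions check out, so there is nothing to add.
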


We need to compute $a_K$, $a_L$, and $S=\sum_{m\geq 3\atop m
\equiv 1 (2)}\sum_{\fp\in\fc}\frac1{mN\fp^m}$.

To this end, let $F$ be any algebraic number field. Then the
residue of $\zeta_F(s)$ at $s=1$ is
$$a_F=\frac{2^{r_1}(2\pi)^{r_2}R_Fh_F}{w_F\sqrt{|d_F|}},$$
where $r_1$ and $r_2$ are the number of inequivalent real and
complex embeddings of $F$ into $\C$, respectively; $R_F$ is the
regulator of $F$; $h_F$ its class number; $w_F$ the number of
roots of unity in $\OO_F$; and $d_F$ is the discriminant of $F$.

For $K_1=\Q(\sqrt{-5}\,)$, $r_1=0$, $r_2=1$, $R_{K_1}=1$,
$w_{K_1}=2$, and $d_{K_1}=-20$, and hence
$$a_{K_1}=\frac{\pi}{\sqrt{5}}.$$

For $K_2=\Q(\sqrt{-15}\,)$, $r_1=0$, $r_2=1$, $R_{K_2}=1$,
$w_{K_2}=2$, and $d_{K_2}=-15$, and hence
$$a_{K_2}=\frac{2\pi}{\sqrt{15}}.$$

The Hilbert class fields of $\Q(\sqrt{-5}\,)$ and
$\Q(\sqrt{-15}\,)$ are $L_1=\Q(\sqrt{-5},\sqrt{5}\,)$ and
$L_2=\Q(\sqrt{-15},\sqrt{5}\,)$, respectively. To compute
$a_{L_i}$ in these two cases, we first notice that $r_1=0$ and
$r_2=2$. To compute the other invariants, we shall use the fact
that $L_i$ are CM-fields, which will allow us to compute the
regulators $R_L$, and the fact that $\Gal(L_i/\Q)\simeq C(2)\times
C(2)$, the Klein four group, which will give us a way to compute
the class numbers.

To this end, let $L^+=L\cap \R=\Q(\sqrt{5}\,)$ in both cases
$L=L_i$. Now $R_{L^+}=\log ((1+\sqrt{5})/2)$ and by Proposition
4.16 of \cite{Wash} (for example)
$R_L=(1/Q)2\log((1+\sqrt{5})/2)$, where $Q=(E_L:W_LE_{L^+})\in
\{1,2\}$ with $E_F$ and $W_F$ the group of units, respectively,
roots of unity in $\OO_F$ for any number field $F$. But in our two
cases, $Q=1$; see \cite{Lem2} Theorem 1. Thus in both cases
$$R_L=2\log\frac{1+\sqrt{5}}{2}.$$

By Proposition 17 on page 68 in \cite{Lg} (for example) we see
$$ d_{L_1}=20^2$$ and $$d_{L_2}=15^2.$$

Finally, to compute the class numbers, we use Kuroda's class
number formula: $$h_L=\frac1{2}q(L)h_1h_2h_3,$$ where the $h_i$
are the class numbers of the three quadratic subfields of $L$, and
$q(L)=(E_L:E_1E_2E_3)$ with $E_i$ the group of units in the
quadratic subfields, cf. for example \cite{Lem1}. In our cases,
$h_1h_2h_3=2$ and since $L/K$ is unramified $q(L)=1$, \cite{Lem2},
Theorem 1. Hence in both cases $$h_L=1.$$

Therefore,
$$a_{L_1}=\frac{\pi^2}{10}\log\left(\frac{1+\sqrt{5}}{2}\right),$$ and
$$a_{L_2}=\frac{4\pi^2}{15}\log\left(\frac{1+\sqrt{5}}{2}\right).$$

Next, we need to approximate the two series $$S_i:=\sum_{m\geq
3\atop m \equiv 1 (2)}\sum_{\fp\in\fc_i}\frac1{mN\fp^m}$$ for the
fields $K_i$, $i=1,2$ and where $\Cl(K_i)=\la \fc_i\ra.$ Now,
since
$$\sum_{m\geq
3\atop m \equiv 1
(2)}\frac{1}{mz^m}=\frac1{2}\left(\frac{\log(z+1)}{\log(z-1)}-\frac2{z}\right),$$
we see that
$$S=\sum_{\fp\in\fc}\sum_{m\geq
3\atop m \equiv 1 (2)}\frac1{mN\fp^m}=\sum_{\fp\in\fc}\frac1{2}
\left[\log\left(\frac{N\fp+1}{N\fp-1}\right)-\frac{2}{N\fp}\right].$$

We now truncate the series $S$ at $N\fp<x$ for $x>3$ and estimate
the truncation error by a little elementary calculus. To this end,
we write $$S=S(x)+E(x),$$ where $$S(x):=\sum_{\fp\in\fc\atop
N\fp<x}\frac1{2}
\left[\log\left(\frac{N\fp+1}{N\fp-1}\right)-\frac{2}{N\fp}\right]$$
and $$E(x)=\sum_{\fp\in\fc\atop N\fp\geq x}\sum_{m\geq 3\atop m
\equiv 1 (2)}\frac1{mN\fp^m}.$$ Now, notice that
$$\sum_{\fp\in\fc\atop N\fp\geq x}\frac1{mN\fp^m}<\sum_{k\geq
x}\frac2{mk^m}<\int_{x-1}^\infty\frac2{mt^m}dt=\frac2{m(m-1)(x-1)^{m-1}},$$
since $N\fp=k$ can occur at most twice (when $p\OO_K$ splits where
$\fp | p$). Hence $$|E(x)|\leq
\sum_{m=3}^\infty\frac2{m(m-1)(x-1)^{m-1}}$$ $$
<\frac1{3}\sum_{m=3}^\infty\frac1{(x-1)^{m-1}}=
\frac1{3(x-1)(x-2)}<\frac1{3(x-2)^2}.$$

Next, to approximate $S(x)$,  we need to find out which prime
ideals are not principal in $\OO_{K_i}$.  But since the $L$ are
abelian over $\Q$, the prime ideals that are nonprincipal are
determined by congruences on the rational primes contained in
these ideals. We now review this procedure. We consider the case
$K=K_1$. Let $(d_K/\;)$ denote the Kronecker symbol and suppose
$\fp|p$, $p$ a positive rational prime; then $(d_K/p)=-1$ if and
only if $\fp=p\OO_K$, i.e. $p$ is inert in $K$.  By reciprocity,
this occurs when $p\equiv 11,13,17,19 \bmod 20$. Hence in this
case, $\fp$ is a principal ideal. Therefore, if $\fp$ is
nonprincipal, then $(d_K/p)=1$ or $0$, i.e. $p$ splits or is
ramified, respectively, in $K$. Suppose first that
$p\OO_K=\fp\overline{\fp},$ for distinct prime ideals $\fp$ and
$\overline{\fp}$. Then by properties of the Hilbert class field of
$K$, $\fp$ and $\overline{\fp}$ are nonprincipal if and only if
$\fp\OO_L$ is a prime ideal. For $K_1$, this happens if and only
if  $(-20/p)=1$ and $(-1/p)=-1$, i.e. if and only if $p\equiv
3,7\bmod 20.$ (Notice then that $\fp$ and $\overline{\fp}$ are
principal when $p\equiv 1,9 \bmod 20.$) On the other hand, the
ramified primes in $K_1$ are the (unique) prime ideals dividing 2
and 5. But if $\fp|5$ then $\fp=\sqrt{-5}\OO_{K_1}$, which is
principal; whereas if $\fp|2$, then $\fp$ is nonprincipal, since
otherwise $\fp=(a+b\sqrt{-5})\OO_{K_1}$ for some $a,b\in\Z$, in
which case $2=N\fp=a^2+5b^2$, which is absurd. Similarly, for
$K_2$, $\fp$ is nonprincipal when $(-15/p)=1$ and $(-3/p)=-1$,
i.e. when $\fp|p$ where $p\equiv 2,8\bmod 15$, and for  $p=3,5$
(ramified case). (On the other hand, $\fp$ is principal whenever
$p\equiv 1,4,7,11,13,14 \bmod 15$.)

Thus,
$$S_1(x)=\frac1{2}
\left[\log 3-1\right]+\sum_{p<x\atop p\equiv 3,7
(20)}\left[\log\left(\frac{p+1}{p-1}\right)-\frac{2}{p}\right]$$
and
$$S_2(x)=\frac1{2}\log 3-\frac1{3}-\frac1{5}+
\sum_{p<x\atop p\equiv 2,8
(15)}\left[\log\left(\frac{p+1}{p-1}\right)-\frac{2}{p}\right].$$

To approximate $S$ to four decimal places, say, we use
$|E(x)|<1/(3(x-2)^2)<.5\times 10^{-4}$, in which case we may take
$x=84$. Then notice that $p\equiv 3,7\bmod 20$ with $p<84$ if and
only if $p=3,7,23,43,47,67,83.$ Also $p\equiv 2,8\bmod 15$ with
$p<84$ if and only if $p=2,17,23,47,53,83.$ Hence $S_1\approx
S_1(84)\approx 0.077827$ and $S_2\approx S_2(84)\approx 0.232435$
good to four decimal places.

On the other hand,  $$\log a_{K_i}-\frac1{2}\log a_{L_i}\approx
0.71229745 \mbox{~~~and~~~} 0.36572386 $$ for $i=1,2$,
respectively.

Therefore $$g_{\fc_1}(1)\approx 0.6343$$ and
$$g_{\fc_2}(1)\approx 0.1333.$$

This shows that the coefficient $B$ differs for these two
quadratic number fields.

Finally, as promised in the introduction, we characterize the
primes and irreducibles in $\Z[\sqrt{-5}\,]$ and
$\Z[\sqrt{-15}\,]$ in terms of  rational primes.

\begin{prop} a) An element $\pi$ is prime in $\Z[\sqrt{-5}\,]$ if and
only if $\pi |p$ a positive rational prime such that $p=5$ or
$p\equiv 1,9,11,13,17,19 \bmod 20$;

b) $\pi$ is prime in $\Z[\sqrt{-15}\,]$ if and only if  $p\equiv
1,4,7,11,13,14 \bmod 15$;

c) $\alpha$ is irreducible but not prime in $\Z[\sqrt{-5}\,]$ if
and only if $|N(\alpha)|=p_1p_2$ where $p_1,p_2$ are positive
rational primes such that $p_i=2$ or $p_i\equiv 3,7 \bmod 20$;

d) $\alpha$ is irreducible but not prime in $\Z[\sqrt{-15}\,]$ if
and only if $|N(\alpha)|=p_1p_2$ where $p_i=3,5$ or $p_i\equiv 2,8
\bmod 15$.

\end{prop}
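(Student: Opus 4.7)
My plan is to combine two ingredients: first, that in a Dedekind domain $\pi$ is prime if and only if $(\pi)$ is a nonzero prime ideal, which (for $\pi\in\OO_K$) means a principal prime ideal; second, that in a number field of class number $2$ with nonprincipal class $\fc$, the only minimal representations of $1$ in $\Cl$ are $1\overset{min}{=}1$ and $\fc\cdot\fc\overset{min}{=}1$. Combined with the correspondence between irreducibles and minimal products of prime ideals established in the proof of Proposition \ref{P1}, this gives the key dichotomy: an irreducible $\alpha\in\OO_K$ (class number $2$) satisfies either $(\alpha)=\fp$ for some principal prime ideal $\fp$ (so $\alpha$ is prime), or $(\alpha)=\fp_1\fp_2$ with both $\fp_i$ nonprincipal (so $\alpha$ is irreducible but not prime).

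For parts (a) and (b), I would simply read off the classification of principal prime ideals already worked out earlier in this section. In $K_1=\Q(\sqrt{-5})$ the principal primes lie over $p=5$ (ramified, with $(\sqrt{-5})$ principal), over the inert primes $p\equiv 11,13,17,19\pmod{20}$ (where $\fp=(p)$), and over the split primes $p\equiv 1,9\pmod{20}$ (split with $\fp$ principal, detected by whether $\fp\OO_L$ remains prime in the Hilbert class field $L$). In $K_2=\Q(\sqrt{-15})$ the principal primes correspond analogously to $p\equiv 1,4,7,11,13,14\pmod{15}$. This yields (a) and (b) directly.

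For (c) and (d), I would apply the dichotomy to conclude that $\alpha$ is irreducible but not prime if and only if $(\alpha)=\fp_1\fp_2$ with both $\fp_i$ nonprincipal. Each such $\fp_i$ lies over a split or ramified rational prime $p_i$, and so has $N(\fp_i)=p_i$; multiplicativity of the norm gives $|N(\alpha)|=p_1p_2$ with $p_i$ restricted to the congruence classes $p_i=2$ or $p_i\equiv 3,7\pmod{20}$ in $K_1$, and $p_i\in\{3,5\}$ or $p_i\equiv 2,8\pmod{15}$ in $K_2$. For the converse direction, any $\alpha$ with $|N(\alpha)|=p_1p_2$ of the stated form satisfies $(\alpha)=\fp_1\fp_2$: the ideal $(\alpha)$ has norm $p_1p_2$, and since neither $p_i$ is inert in $\OO_K$ every prime above $p_i$ has norm $p_i$, so unique factorization of ideals forces a factorization of length $2$, and both factors are nonprincipal by the classification in (a), (b).

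The main step demanding care is the irreducibility criterion used in the first paragraph: if $(\alpha)=\fp_1\fp_2$ with both $\fp_i$ nonprincipal, and one could write $\alpha=\beta\gamma$ with $\beta,\gamma$ non-units, then $(\beta)(\gamma)=\fp_1\fp_2$ and unique factorization into prime ideals would force $\{(\beta),(\gamma)\}=\{\fp_1,\fp_2\}$, contradicting the nonprincipality of each $\fp_i$. Everything else amounts to bookkeeping over the congruence data and splitting behavior already computed earlier in this section, so no new nontrivial obstacle is expected.
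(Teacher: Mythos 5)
Your proof is correct, and it uses exactly the ingredients the paper intends: the class-number-two dichotomy for irreducibles (via the minimal representations $1\overset{min}{=}1$ and $\fc\cdot\fc\overset{min}{=}1$ together with Proposition \ref{P1}) combined with the congruence classification of principal versus nonprincipal prime ideals worked out earlier in the Examples section. The paper in fact states this proposition without proof, so your argument supplies the omitted details in the natural way, including the correct unique-factorization step showing that a product of two nonprincipal primes is generated by an irreducible.
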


\newpage

\vspace{.2in}

{\bf Address of the authors:}

\medskip

Bradley, \"Ozl\"uk, Snyder

Department of Mathematics and Statistics

University of Maine

Orono, Maine 04469

~~~~~~~~~~~~~~and

Research Institute of Mathematics

 Orono, ME 04473 \vspace{.15in}

 \smallskip

Rozario

19 Balsam Drive

Bangor, Maine  04401

\bigskip

 {\bf e-mail addresses}

 \medskip

 bradley@math.umaine.edu

 ozluk@math.umaine.edu

 rozario@umich.edu

 snyder@math.umaine.edu

\end{document}